\theoremstyle{plain}
\newtheorem{theo}{Theorem}[section]
\newtheorem*{theo*}{Theorem}
\newtheorem{prop}[theo]{Proposition}
\newtheorem{lemm}[theo]{Lemma}
\newtheorem{defi}[theo]{Definition}
\theoremstyle{definition}
\DeclareMathOperator{\supp}{supp}
\DeclareMathOperator{\diff}{d}
\DeclareSymbolFont{pletters}{OT1}{cmr}{m}{sl}
\DeclareMathSymbol{s}{\mathalpha}{pletters}{`s}
\def\ba{\begin{align}}
	\def\bad{\begin{aligned}}
		\def\be{\begin{equation}}
			\def\ea{\end{align}}
		\def\ead{\end{aligned}}
	\def\ee{\end{equation}}
\def \D{\diff\! }
\def\dt{\diff \! t}
\def\ddt{\frac{{\rm d}}{{\rm d}t}}
\def\ds{\diff \! s}
\def\dx{\diff \! x}
\def\da{\diff \! a}
\def\dxa{\diff \! x \diff \! a}
\def\dy{\diff \! y}
\def\dd{\diff\! }
\def\dtau{\diff \! \tau}
\def\le{\leq}
\newcommand{\var}{\varepsilon}
\def \R{\mathbb{R}}
\numberwithin{equation}{section}
\title{Concentration in an advection-diffusion model with
  diffusion coefficient depending on the past trajectory}
\author{Cosmin Burtea} \address[C. Burtea]{Institut Mathématique
  de Jussieu, Université Paris Cité, Paris, France}
\email{cburtea@math.univ-paris-diderot.fr}
\author{Nicolas Meunier} \address[N. Meunier]{LaMME, UMR CNRS
  8071, Universit\'e d'\'Evry, Val d’Essonne, France}
\email{nicolas.meunier@univ-evry.fr}
\author{Cl\'ement Mouhot} \address[C. Mouhot]{DPMMS, University
  of Cambridge, Cambridge, UK} \email{c.mouhot@dpmms.cam.ac.uk}
\begin{document}
	
\maketitle

\begin{abstract}
  We consider a drift-diffusion model, with an unknown function
  depending on the spatial variable and an additional structural
  variable, the amount of ingested lipid. The diffusion
  coefficient depends on this additional variable. The drift acts
  on this additional variable, with a power-law coefficient of
  the additional variable and a localization function in
  space. It models the dynamics of a population of macrophage
  cells. Lipids are located in a given region of space; when
  cells pass through this region, they internalize some
  lipids. This leads to a problem whose mathematical novelty is
  the dependence of the diffusion coefficient on the past
  trajectory. We discuss global existence and blow-up of the
  solution.
\end{abstract}

	
\section{Introduction}

\subsection{The problem at hand}

Atherosclerosis is a major cause of death in industrialized
societies since it is the primary cause of heart attack (acute
myocardial infarction) and stroke (cerebrovascular accident). It
is now accepted that atherosclerosis is a chronic inflammatory
disease which starts within the intima, the innermost layer of an
artery. It is driven by the accumulation of macrophage cells
within the intima and promoted by modified low density
lipoprotein (LDL) particles~\cite{Libby,Tabas}. Inflammation
occurs at sites within the arterial wall where modified
low-density lipoproteins (LDL) accumulate after penetrating the
wall from the bloodstream. The immune response attracts
circulating monocytes to these sites.
	
Mathematical modeling of atherosclerosis has recently gained
interest because of the variety of behavior of macrophages
depending on the amount of lipid ingested~\cite{Moore}. Many
mathematical models divide the macrophage population into
``macrophages'' and ``foam cells''. This is true in models based
on ordinary differential equations~\cite{Cohen}, spatially
resolved partial differential equations
models~\cite{Chalmers,Hao_1,Hao_2,Calvez_1,Calvez_2} and other
computational models~\cite{Parton}. However, it is now clear that
there is a continuous distribution of lipid loads in macrophage
populations, ranging from monocytes with low lipid loads to
macrophages that can be labeled as foam cells in
atherosclerosis. The variation in lipid load within a macrophage
population suggests that it may be instructive to develop
structured models in which macrophages are characterized by their
intracellular lipid load, similar to adipocytes~\cite{Soula}, in
the spirit of~\cite{Muller,Byrne}.
	
We consider here the case where the arterial wall is $\R^d$ with
$d\ge 1$ and lipids are localized in a prescribed region defined
by $\operatorname*{Supp}\psi$ where $\psi$ is a given
non-negative function of space. We describe the dynamics of
macrophages in the arterial wall by a statistical (kinetic)
description of the evolution of their density with respect to
the spatial and lipid variables: 
\begin{equation}
  \begin{cases}
    \partial_{t}u=a\Delta_{x}u+ \partial_{a}\left\{
      \left[ \psi (x)f(a)- \lambda a(A^*-a)\right] u\right\},  & x
    \in \R^d, a\ge 0, t>0, \\[2mm]
    u_{|t=0}=u_{0}, & x \in \R^d, a\ge 0,
  \end{cases}
  \label{Pacman_general}
\end{equation}
where $A^*>0$, $\lambda \ge 0$ and $f$ is a given function.

In~\eqref{Pacman_general}, $u(t,x,a)$ is the density of cells
(macrophages) located at time $t$ in $x\in \R^d$ and having with
an amount $a$ of ingested lipids. The amount of lipids in the
cell affects its spatial dynamics by changing its size for
example. This effect is modeled by a diffusion coefficient
depending on the variable $a$. The term $\lambda a(A^*-a)$ is a
logistic (dumping) term that describes the recovery effect,
i.e. the tendency of a cell to return to its normal state as it
moves away from the hot spot.
	 
We address the question of whether or not this
advection-diffusion equation can lead to concentration and
eventually blow-up in finite time, if diffusion is not strong
enough to prevent cells from being trapped in lipid dense
regions. More precisely, our purposes here are to investigate the
influence of the nonlinearity $f$, the parameter $\lambda$ and
the size of the support of $\psi$ on the local or global
well-posedness of problem~\eqref{Pacman_general}. To this end we
will consider functions $f$ which typically grow like a power,
namely
\begin{equation*}
  f(a)=\pm a ^\gamma  \textrm{ with } \gamma >0,  
\end{equation*}
and three different cases for the function $\psi$:
\begin{enumerate}
\item $\psi=\mathds{1}_\R$,
\item $\operatorname*{Supp}\psi\left( x\right) \subset B(0,1)$
  with $\int_{\mathbb{R}^d}\psi\left( x\right) \dx=1$,
\item $\psi = \delta_{x=0}$ is the Dirac mass in dimension $d=1$,
  which was treated in \cite{M} when $\lambda =0$.
\end{enumerate}	

\subsection{The origin of the model}

Let us briefly detail the origin of equation
\eqref{Pacman_general} in the case where $d=1$, see \cite{B,M,EM}
for more details. Consider a cell, described as a Brownian
particle, with position $X_t$, whose diffusion coefficient,
$A_t$, is modified at each passage in the lipid-rich zone and
which tends to recover a normal diffusion coefficient, see
Figure~\ref{macrophage}. The couple $(X_t,A_t)$ satisfies the
system of stochastic differential equations
\begin{equation}
  \label{new}
  \begin{cases}
    \D X_t &= \sqrt{2A_t} \D W_t, \\[2mm]
    \D A_t &= - \psi(X_t) f(A_t) \dt + \ \lambda A_t \left(
      A^* - A_t\right) \dt,
  \end{cases}
\end{equation}
with initial condition $X_0\in \R$, $A_0 >0$, where
$(W_t, t\geq 0)$ is a one-dimensional Brownian motion.

\begin{figure}
  \begin{center}
    \includegraphics[scale=0.5]{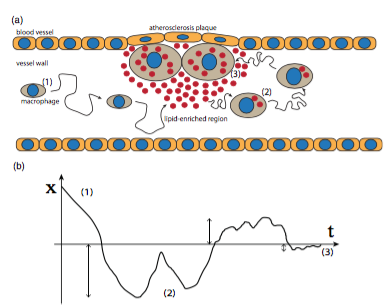}
  \end{center}
  \caption{a) Sketch of the different stages of atherosclerosis
    plaque formation: (1) rapid diffusion of a “free” macrophage
    cell; (2) upon entering a localized lipid-enriched region,
    the macrophage accumulates lipids, and thereby grows and
    becomes less mobile; and (3) after many crossings of the
    lipid-enriched region, the macrophage eventually gets
    trapped, resulting in the formation of an atherosclerotic
    plaque. (b) Sketch of a one-dimensional particle trajectory
    of the model of locally decelerated random walk.}
	\label{macrophage}
\end{figure}
	
Intuitively, $ \int_0^t \psi(X_s) \ds$ represents the time spent
by $X$ in the lipid-rich region during $[0,t]$. Hence, when
$f \ge 0$, the time spent in this region by the macrophage cell
tends to decrease its diffusion coefficient $(A_t, t\geq 0)$, or
equivalently to decelerate the cell. Such a decrease in local
mobility is due to the internalization and accumulation of lipid
particles by macrophages. It can be related to several phenomena:
increase in volume or change in cell fate depending on the amount
of low-density lipoproteins (LDL) ingested. 
	
The parameter $\gamma$ describes the intensity of the
internalization: the higher $\gamma$ is, the lower the
internalization is. This parameter is thus related to both the
amount of lipids located in the arterial wall hotspot and the
capacity of macrophages to internalize lipids. The case
$\gamma =0$ corresponds to a very large amount of lipids and a
very large capacity of macrophages to internalize lipids. In
summary, the parameter $\gamma $ is related to the inflammatory
state. In contrast, the term
$\lambda A_t \left( A^* - A_t\right) \dt$ describes the loss of
lipids by the macrophage and its tendency to recover its natural
state. The parameters $\lambda$ and $A^*$ model the natural
immune defenses or medical treatment.
	
System~\eqref{new} models the effects of lipid accumulation on
macrophage dynamics. The novelty of~\eqref{new}, compared with
the related model in \cite{B,M}, is to include different
geometries for lipid source locations (from the Dirac case to the
whole real line) and to include the lipid unloading through the
logistic dumping term. Non-trivial behaviors of the solution of
\eqref{new} are expected: a dynamic transition to an absorbing
state can occur for a sufficiently strong deceleration, and the
particle can be trapped in finite time; whereas if the
deceleration process is sufficiently weak, the particle is never
trapped.

\subsection{The main results}

Since the couple $(X_t,A_t)$ is Markovian, in order to make the
previous intuition rigorous, we study the partial differential
equation~\eqref{Pacman_general} and the general questions we are
concerned with are the following. By studying the regularity of
the solution to~\eqref{Pacman_general}, do we recover the results
expected by the probabilistic description, in particular by the
simplified differential equations $A'(t) = - f(A(t))$? In
particular, if $f (a) = a^\gamma$ with $\gamma \ge 1$, can we
prove global existence as expected from the previous differential
equation? In the case $f (a) = a^\gamma$ with $\gamma \in (0,1)$,
can we prove that the solution to~\eqref{Pacman_general} becomes
unbounded in finite time in any $L^p$ space? In order to ease the
presentation, we will discuss the particular case of space
dimension 1. However, our results remain true in dimension 2. In
Section~\ref{sec:2d} we explain the changes in order to recover
the blow-up for $\gamma \in [0,1)$ in the the two-dimensional
case.

Let us build some intuition on~\eqref{Pacman_general} in the case
where $f(a)=a^\gamma$, $\psi=\mathds{1}_\R$, $\lambda =0$ and
$d=1$. Multiplying~\eqref{Pacman_general} with $a^\gamma$ and
denoting $m(t,x,a)=a^\gamma u(t,x,a)$, we obtain:
\begin{equation}
  \label{edp_droite_2}
  \partial_t m(t,x,a)=a  \partial_{xx}m(t,x,a) +a^\gamma
  \partial_ a m(t,x,a) \, .
\end{equation}  

Let us assume that $\partial_x m(t,x,a)$ vanishes at
$x \sim \pm \infty$ and let us integrate
equation~\eqref{edp_droite_2} on $\R$:
\begin{equation}
  \label{edp_droite_3}
  \partial_t \int_{\R} m (t,x,a)\D x-a^\gamma \partial_ a
  \int_{\R} m(t,x,a) \D x =0\, .
\end{equation}  
We then write the characteristics $a(t)$ of the previous system,
solution to the equation:
\begin{equation}
  \label{eq:caracteristiques}
  \begin{cases}
    a'(t)&=-a^\gamma (t)\\
    a(0)&=a_0 .
  \end{cases}
\end{equation}
The equation~\eqref{eq:caracteristiques} shows how the diffusion
coefficient evolves (note that it changes in the same way for all
individuals) namely we identify two different regimes:
\begin{itemize}
\item if $\gamma \in [0,1)$ the diffusion coefficient vanishes in
  finite time,
\item if $\gamma \ge 1$ the diffusion coefficient is positive for
  all time provided the initial datum $a_0 >0$, and vanishes
  asymptotically in time as $t \to +\infty$.
\end{itemize}

We are now in the position of stating our main results. We begin
with the case where the acceleration-deceleration zone is the
whole real axis:
\begin{equation}
  \label{pacman:droite}
  \begin{cases}
    \partial_{t}u=a\partial_{xx}^{2}u+ \partial_{a}\left(
      f(a)u\right), & x \in \R, \ a\ge 0, \ t>0,\\[2mm]
    u_{|t=0}=u_{0}, & x \in \R, \ a\ge 0.
  \end{cases}
\end{equation}

We consider the three following cases:
\begin{itemize}
  \itemsep0em 
\item[(a)] acceleration: $f(a) = -a^\gamma$ with $\gamma \ge 0$,
\item[(b)] subcritical deceleration: $f(a) = a^\gamma$ with
  $\gamma \geq 1$,
\item[(c)] supercritical deceleration: $f(a) = a^\gamma$ with
  $\gamma \in [0,1)$.
\end{itemize}

In particular, if $f(a)=a^\gamma$, by studying the regularity of
the solution in a $L^p$ framework, we obtain the following
dichotomy: global existence if $\gamma \ge 1$, while, in the case
$\gamma<1$, the solution becomes unbounded in finite time
(so-called blow-up).

This is a linear equation on $u=u(t,x,a)$ defined on $t \ge 0$,
$x \in \R$, $a \ge 0$. 

The first main result reads :
\begin{theo}
  \label{th:edp}
  Assume that the initial datum $u_0$ belongs to $L^p\cap L^1$,
  $p\geq1$. Assume in addition that
  $\mbox{supp}(u_0) \subset \R \times [0,A_0]$ for some $A_0>0$,
  then
  \begin{itemize}
  \item[(a)] in the acceleration case $f(a) = -a^\gamma$ with
    $\gamma \ge 0$, there exists a unique global weak solution
    to~\eqref{pacman:droite} that satisfies
    \begin{equation*}
      \forall \, t \ge 0, \quad \int_{\R} \int_{\R_+} |u(t,x,a)|^p \dd x
      \dd a \le \int_{\R} \int_{\R_+} |u(0,x,a)|^p \dd x \dd a,
    \end{equation*}
  \item[(b)] in the subcritical deceleration case
    $f(a) = a^\gamma$ with $\gamma \ge 1$, there exists a unique
    global weak solution to~\eqref{pacman:droite} that satisfies
    \begin{equation*}
      \forall \, t \ge 0, \quad \int_{\R} \int_{\R_+} |u(t,x,a)|^p \dd x
      \dd a \le e^{\gamma (1-p) A_0^{\gamma-1} t} \int_{\R}
      \int_{\R_+} |u(0,x,a)|^p \dd x \dd a,
    \end{equation*}
  \item[(c)] in the supercritical deceleration case
    $f(a) = a^\gamma$ with $\gamma \in [0,1)$, any non-zero weak
    solution to~\eqref{pacman:droite} blows-up in finite time
    with the time of blow-up
    \begin{equation*}
      T_b := \frac{\int_\R \int_{\R_+} a^{1-\gamma} u(0,x,a)
        \dxa}{(1-\gamma) \left( \int_\R \int_{\R_+} u(0,x,a)
          \dxa \right)} \in (0,+\infty)
    \end{equation*}
    for which all the mass is concentrated at $a=0$. 
  \end{itemize}
\end{theo}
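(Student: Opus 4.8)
The plan is to treat the two global cases (a)--(b) by an energy estimate on $\|u(t)\|_{L^p}$ and the supercritical case (c) by an \emph{a priori} obstruction carried by a weighted first moment in $a$. In all cases I would first produce the solution by a regularisation argument: replace the degenerate coefficient $a$ in front of $\partial_{xx}$ by $a+\var$ and the drift $f(a)=\pm a^\gamma$ (merely Hölder at $a=0$ when $\gamma<1$) by a globally Lipschitz truncation $f_\var$; the regularised problem is linear and uniformly parabolic, hence solvable by classical theory, and the bounds below are uniform in $\var$, so weak-$*$ compactness yields a weak solution of \eqref{pacman:droite}, unique by linearity and an $L^2$ estimate on the difference of two solutions. (Alternatively, Fourier transforming in $x$ turns \eqref{pacman:droite} into a transport equation in $a$ along the characteristics $a'=-f(a)$, giving the explicit formula $u\big(t,x,A(t;a_0)\big)=J(t;a_0)^{-1}\big(G_{\tau(t;a_0)}\ast u_0(\cdot,a_0)\big)(x)$, with $J=\partial_{a_0}A$, $\tau(t;a_0)=\int_0^t A(s;a_0)\,\ds$ and $G_\tau$ the one-dimensional heat kernel of variance $2\tau$.) A structural fact I would use repeatedly is that in cases (b)--(c) the $a$-characteristics are non-increasing and the diffusion does not act in $a$, hence $\supp u(t)\subset\R\times[0,A_0]$ for every $t$.

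For (a)--(b), multiplying \eqref{pacman:droite} by $p|u|^{p-2}u$ and integrating by parts in $x$ and in $a$ over $\R\times\R_+$ yields
\begin{equation*}
  \ddt \int_\R\!\int_{\R_+} |u|^p \,\dxa
   = -\,p(p-1)\!\int_\R\!\int_{\R_+} a\,|u|^{p-2}|\partial_x u|^2\,\dxa
     - \int_\R f(0)\,|u(t,x,0)|^p\,\dx
     + (p-1)\!\int_\R\!\int_{\R_+} f'(a)\,|u|^p\,\dxa .
\end{equation*}
In case (a) one has $f(0)=0$ when $\gamma>0$, while for $\gamma=0$ the $a$-characteristics leave $\{a=0\}$ and open a vacuum there, so $u(t,x,0)=0$; moreover $f'(a)=-\gamma a^{\gamma-1}\le0$. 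Hence the right-hand side is $\le0$ and $\|u(t)\|_{L^p}$ is non-increasing. In case (b), $f(0)=0$ and $0\le f'(a)=\gamma a^{\gamma-1}\le\gamma A_0^{\gamma-1}$ on $\supp u(t)$, so $\ddt\|u(t)\|_{L^p}^p\le\gamma(p-1)A_0^{\gamma-1}\|u(t)\|_{L^p}^p$ and Grönwall's lemma gives the exponential control of (b).

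For (c) the energy estimate fails precisely because $f'(a)=\gamma a^{\gamma-1}$ is unbounded near $a=0$ — this degeneracy is the engine of the blow-up. Instead I would test \eqref{pacman:droite} against $\phi(a)=a^{1-\gamma}$: although $\phi$ is not $C^1$ at $0$, the product $f(a)\phi'(a)=(1-\gamma)a^{\gamma}a^{-\gamma}=1-\gamma$ is bounded and $\phi(0)=0$, so no boundary term at $\{a=0\}$ arises and the diffusion contributes nothing ($\partial_{xx}\phi=0$); one obtains the exact identity
\begin{equation*}
  \ddt \int_\R\!\int_{\R_+} a^{1-\gamma}\,u(t,x,a)\,\dxa
   = -(1-\gamma)\int_\R\!\int_{\R_+} u(t,x,a)\,\dxa .
\end{equation*}
Testing against $\phi\equiv1$ (again $f(0)=0$) gives conservation of mass, $\int_\R\!\int_{\R_+}u(t)\,\dxa=M_0:=\int_\R\!\int_{\R_+}u_0\,\dxa$; substituting and integrating in time,
\begin{equation*}
  \int_\R\!\int_{\R_+} a^{1-\gamma}\,u(t,x,a)\,\dxa
   = \int_\R\!\int_{\R_+} a^{1-\gamma}\,u_0(x,a)\,\dxa \;-\;(1-\gamma)\,M_0\,t .
\end{equation*}
A nonnegative $L^1$ density has nonnegative weighted moment, so the right-hand side must remain $\ge0$, which forces $t\le T_b$; at $t=T_b$ the weighted moment vanishes while the total mass is still $M_0>0$, which is impossible for any $L^p$ profile, so all the mass has concentrated on $\{a=0\}$ and $u(t)\rightharpoonup M_0\,\mu(\dx)\otimes\delta_{0}(\da)$ weakly-$*$ as $t\uparrow T_b$. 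The finiteness and positivity of $T_b$ come from $0<\int_\R\!\int_{\R_+}a^{1-\gamma}u_0\le A_0^{1-\gamma}M_0<\infty$ and $0<M_0<\infty$. (For signed data one decomposes $u_0=u_0^+-u_0^-$ and uses linearity; when $\gamma=0$ the flux $f(0)=1$ makes the mass non-increasing, which only reinforces the obstruction and yields blow-up by a time $\le T_b$.)

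The main obstacle is not the formal identities above but their rigorous justification for genuine weak solutions, because of the two degeneracies at $a=0$: the operator $a\,\partial_{xx}$ is not uniformly elliptic, and $f$ is only Hölder there when $\gamma<1$. One has to control the boundary bookkeeping at $\{a=0\}$ (vanishing flux because $f(0)=0$, and for $\gamma<1$ the survival of the weighted-moment identity once a Dirac component starts to form at $\{a=0\}$), rule out escape of mass to $a=+\infty$ (this is where $\supp u(t)\subset\R\times[0,A_0]$ enters), and finally identify the limiting object at $t=T_b$ as a full-mass Dirac in $a$. The crux is the competition between the concentration forced by the $a^{\gamma-1}$ singularity and the conservation of mass; the sharp blow-up time $T_b$ is nothing but the exactness of the moment ODE.
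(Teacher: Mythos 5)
Your proof follows essentially the same approach as the paper in all three cases: the $L^p$ energy estimate with integration by parts in $a$ for (a) and (b), and the exact evolution of the moment $\int\!\int a^{1-\gamma}u\,\dxa$ combined with conservation of mass for (c). You recover the growth rate $e^{\gamma(p-1)A_0^{\gamma-1}t}$ in case (b), which agrees with the paper's own Gronwall computation (the exponent $\gamma(1-p)$ in the theorem statement appears to be a sign typo), and your explicit tracking of the $a=0$ boundary term and of the $\gamma=0$ case is slightly more careful than the paper's presentation, which drops that term.
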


Our second result concerns the case where the
acceleration-deceleration zone is localized:
\begin{equation}
  \begin{cases}
    \partial_{t}u=a\partial_{xx}^{2}u+\psi\left( x\right)
    \partial_{a}\left(
      f(a)u\right),  & x \in \R, \ a\ge 0, \ t>0,\\[2mm]
    u_{|t=0}=u_{0}, & x \in \R, \ a\ge 0,
  \end{cases}
  \label{Pacman_prototip}
\end{equation}
with a localization function that satisfies
\begin{equation*}
  \operatorname*{Supp}\psi\left( x\right) \subset\left[
    -1,1\right] \quad \text{ and } \quad
  \int_{\mathbb{R}}\psi\left( x\right) \dx=1.
\end{equation*}
We will consider the three regimes described above. 
\begin{theo}
  \label{Blow-up_viriel}
  Given $u_0 \in L^1 \cap L^p$, $p\geq1$, with $u_0 \ge 0$ and
  $\supp u_0 \subset \R \times [0,A_0]$, $A_0>0$, we have:
  \begin{itemize}
  \item[(a)] In the acceleration case, $f(a) = -a^\gamma$ with
    $\gamma >0$, there exists a unique weak solution
    to~\eqref{pacman:interval} that satisfies
     \begin{equation*}
      \forall \, t \ge 0, \quad \int_{\R} \int_{\R_+} |u(t,x,a)|^p \dd x
      \dd a \le \int_{\R} \int_{\R_+} |u(0,x,a)|^p \dd x \dd a,.
    \end{equation*}
  \item[(b)] In the subcritical deceleration case
    $f(a) = a^\gamma$ with $\gamma \ge 1$, there exists a unique
    weak solution to~\eqref{pacman:interval} that satisfies
    \begin{equation*}
      \forall \, t \ge 0, \quad \int_{\R} \int_{\R_+} |u(t,x,a)|^p \dd x
      \dd a \le e^{\frac{\gamma (p-1)
    A_0^{\gamma-1}}{2\delta} t} \int_{\R} \int_{\R_+} |u(0,x,a)|^p \dd x \dd a.
    \end{equation*}
    Moreover, when $\gamma>3/2$, the following bound (uniform in
    $\delta$) holds
    \begin{equation*}
      \forall \, t \ge 0, \quad \int_{\R} \int_{\R_+} |u(t,x,a)|^p \dd x
      \dd a \le e^{\gamma^2(p-1) t} \int_{\R} \int_{\R_+} |u(0,x,a)|^p \dd x \dd a.
    \end{equation*}
  
  \item[(c)] in the supercritical deceleration case $f(a) = a^\gamma$ with $\gamma \in [0,1)$, provided that $a_0$ is small, there exists arbitrarily small non-negative initial data
  \begin{equation*}
    0 \le u_{0}\in L^{1}\cap L^p ( \R \times \R_+ ) ,\quad
    \operatorname*{Supp}u_{0}\subset\mathbb{R}\times\lbrack
    a_{0},+\infty)
  \end{equation*}
  so that the corresponding unique weak solution
  $0 \le u \in L^1 \cap L^p$ to~\eqref{Pacman_prototip} blows-up
  in finite time $T^{\ast} \in (0,+\infty)$, in the sense
  \begin{equation}
    \label{accumulation}
    \limsup_{t\rightarrow T^{\ast}}\left\Vert u\left(  t\right)
    \right\Vert_{L^p(\R\times \R_+)}=+\infty.
  \end{equation}
  Moreover this time of explosion can be as small as wanted   provided that $a_0$ is small enough.
  \end{itemize}
\end{theo}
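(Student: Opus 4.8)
Items (a) and (b) follow from a parabolic $L^{p}$ energy estimate once a unique non-negative weak solution is constructed, while (c) is a virial/trapping argument exploiting that in the supercritical regime the age variable $a$ is non-increasing along the flow and collapses to $0$ in finite time. Throughout I would first regularise \eqref{Pacman_prototip} — e.g. replacing the degenerate coefficient $a$ by $a+\eps$ together with a far-field truncation (or adding a vanishing viscosity $\eps\partial_{aa}$) — so as to work with classical non-negative solutions $u_{\eps}$, derive $\eps$-uniform bounds, and pass to the limit; uniqueness comes from linearity and the $L^{p}$ bound. For (a) and (b), multiply the equation by $p|u|^{p-2}u$ and integrate: the diffusion contributes $-p(p-1)\int\!\!\int a\,|u|^{p-2}|\partial_{x}u|^{2}\,\dxa\le 0$, and two integrations by parts in $a$ (boundary terms at $a=0,\infty$ vanishing since $a^{\gamma}u\to0$) turn the transport term into $\mp(p-1)\gamma\int\!\!\int\psi(x)\,a^{\gamma-1}|u|^{p}\,\dxa$. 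In case (a) the sign is favourable, giving $\|u(t)\|_{L^{p}}\le\|u_{0}\|_{L^{p}}$. In case (b) one uses that the characteristic $a'=-\psi(X)a^{\gamma}\le 0$ is non-increasing, so $\supp_{a}u(t)\subset[0,A_{0}]$ and $a^{\gamma-1}\le A_{0}^{\gamma-1}$, whence $\ddt\|u\|_{L^{p}}^{p}\le (p-1)\gamma A_{0}^{\gamma-1}\|\psi\|_{L^{\infty}}\|u\|_{L^{p}}^{p}$ and the claimed bound follows by Grönwall. For the width-independent bound when $\gamma>\tdm$, I instead absorb the bad term into the dissipation: with $v=|u|^{p/2}$, the one-dimensional inequality $\|v(\cdot,a)\|_{L^{\infty}_{x}}^{2}\le 2\|v(\cdot,a)\|_{L^{2}_{x}}\|\partial_{x}v(\cdot,a)\|_{L^{2}_{x}}$ and $\int\psi\,\dx=1$ give $\int\psi\,a^{\gamma-1}|u|^{p}\,\dx\le C\,a^{\gamma-1}\|v\|_{L^{2}_{x}}\|\partial_{x}v\|_{L^{2}_{x}}$; splitting $a^{\gamma-1}=a^{\gamma-\tdm}\cdot a^{\mez}$, applying Young's inequality with a weight that is eaten by $\int a\,|\partial_{x}v|^{2}\dx$, and using that the leftover power $a^{2\gamma-3}$ stays bounded on $\supp_{a}u\subset[0,A_{0}]$, one reaches $\ddt\|u\|_{L^{p}}^{p}\le C(\gamma,p)\|u\|_{L^{p}}^{p}$ with $C$ independent of the width of $\psi$; optimising the constants yields the stated exponent.

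For (c), fix the profile $\psi$; since $\psi\ge0$ and $\int\psi=1$ there are $x_{*}$ and $r_{1},c_{0}>0$ with $\psi\ge c_{0}$ on $I_{1}:=[x_{*}-r_{1},x_{*}+r_{1}]$, and I pick a cut-off $\rho$ with $0\le\rho\le1$, $\rho\equiv1$ on $[x_{*}-r_{1}/2,x_{*}+r_{1}/2]$, $\supp\rho\subset I_{1}$. Given $\mu>0$ (arbitrarily small) and a small $a_{0}>0$, set $u_{0}:=\frac{\mu}{|Q|}\mathds{1}_{Q}$ with $Q:=[x_{*}-r_{1}/4,x_{*}+r_{1}/4]\times[a_{0},2a_{0}]$, so $\|u_{0}\|_{L^{1}}=\mu$, $u_{0}\ge0$ is as small as desired, $\supp u_{0}\subset\{\rho=1\}\cap(\R\times[a_0,+\infty))$. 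Let $u\ge0$ be the weak solution on its maximal interval $[0,T^{*})$. Because the $a$-characteristic only decreases, $\supp_{a}u(t)\subset[0,2a_{0}]$ for all $t<T^{*}$, so the diffusion coefficient is everywhere $\le 2a_{0}$. Introduce $M(t):=\int\!\!\int\rho\,u(t)\,\dxa$ and $W(t):=\int\!\!\int\rho\,a^{1-\gamma}u(t)\,\dxa$; testing the equation against $\rho$ and against $\rho\,a^{1-\gamma}$ (rigorously at the regularised level) gives
\begin{align*}
  \ddt M(t)&=\int\!\!\int a\,\rho''(x)\,u\,\dxa,\\
  \ddt W(t)&=\int\!\!\int a^{2-\gamma}\rho''(x)\,u\,\dxa-(1-\gamma)\int\!\!\int\rho(x)\psi(x)\,u\,\dxa,
\end{align*}
the $a$-transport contributing nothing to $\ddt M$ since $\rho$ is $a$-independent and $a^{\gamma}u$ vanishes at the endpoints.

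From the first identity, $|\ddt M|\le 2a_{0}\|\rho''\|_{L^{\infty}}\mu$ and $M(0)=\mu$, hence $M(t)\ge\mu/2$ for $t\le T_{1}:=(4a_{0}\|\rho''\|_{L^{\infty}})^{-1}$. Since $\rho\psi\ge c_{0}\rho$, for $t\le T_{1}$ the second identity gives $\ddt W\le (2a_{0})^{2-\gamma}\|\rho''\|_{L^{\infty}}\mu-(1-\gamma)c_{0}M(t)\le -\tfrac{(1-\gamma)c_{0}}{4}\mu$, provided $a_{0}$ is small enough that $(2a_{0})^{2-\gamma}\|\rho''\|_{L^{\infty}}\le(1-\gamma)c_{0}/4$. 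As $W(0)\le(2a_{0})^{1-\gamma}\mu$ and $W\ge0$, this forces $T^{*}\le T_{W}:=4(2a_{0})^{1-\gamma}/((1-\gamma)c_{0})$ (if the $L^{p}$ norm stayed bounded the solution could be continued, contradicting $W<0$ past $T_W$), and for $a_{0}$ small one has $T_{W}\le T_{1}$, so $W(t)\to0$ as $t\uparrow T^{*}$ while $M(t)\ge\mu/2$. Finally, for any $\eta>0$ one has $\int\!\!\int_{\{a>\eta\}}\rho\,u\le\eta^{\gamma-1}W(t)\to0$, hence $\int_{I_{1}}\int_{0}^{\eta}u(t,x,a)\,\dxa\ge\int\!\!\int_{\{a\le\eta\}}\rho\,u(t)\ge\mu/4$ for $t$ close enough to $T^{*}$; by Hölder, $\mu/4\le\|u(t)\|_{L^{p}}(2r_{1}\eta)^{1-1/p}$, and letting $\eta\to0$ (with $t\to T^{*}$) gives \eqref{accumulation} when $p>1$ (for $p=1$ the $L^1$ norm is conserved and the statement is read as the same concentration). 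The bound $T^{*}\le 4(2a_{0})^{1-\gamma}/((1-\gamma)c_{0})\to0$ as $a_{0}\to0$ gives the last assertion. For $\gamma=0$, where the $a$-flux at $a=0$ produces a genuine mass loss, one truncates $a\mapsto\max(a,\eps)$ and passes to the limit, or observes that the same virial still pushes the mass into a slab $\{a\le\eta\}$ of vanishing thickness just before it is absorbed, which suffices for \eqref{accumulation}.

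\textbf{Main obstacles.} The delicate points are: (i) justifying the two virial identities for merely weak $L^{1}\cap L^{p}$ solutions — the test function $\rho\,a^{1-\gamma}$ has the singular-looking $a$-derivative $(1-\gamma)a^{-\gamma}$ at $a=0$, so this must be carried out on the regularised problem with boundary terms at $a=\eps$ shown to vanish in the limit; (ii) ensuring, in (c), that mass does not escape $I_{1}$ through the $x$-diffusion before $W$ collapses — this is exactly where the smallness of $a_{0}$ (hence of the diffusion coefficient $\le 2a_{0}$) enters, via the control of $\ddt M$; and (iii), for the $\gamma>\tdm$ part of (b), performing the interpolation so that the final constant is genuinely independent of the width of $\psi$.
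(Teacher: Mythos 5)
Your treatment of (a) and of the $\delta$-dependent bound in (b) follows the paper's $L^p$ energy estimate verbatim. For the $\delta$-independent bound when $\gamma>\tdm$, the paper averages $v=u^{p/2}$ over adaptive windows $[x-\var,x+\var]$ with $\var=\eta a^{\gamma-1}$ and then optimises $\eta$; your route via the one-dimensional interpolation inequality $\|v\|_{L^\infty_x}^2\le 2\|v\|_{L^2_x}\|\partial_x v\|_{L^2_x}$ followed by weighted Young and absorption into the dissipation is the same estimate dressed differently, so (b) is substantively identical.

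For (c) your argument is genuinely different from the paper's. The paper tests against $(1-x^2)^2 a^{-m}\mathds{1}_{[-1,1]}$, applies H\"older to bound $y(t)=\int\!\int(1-x^2)^2 a^{-m}u$ by a fractional power of the higher moment $\int\!\int(1-x^2)^2 a^{-(m+1-\gamma)}u$, and closes a superlinear integral inequality $y(t)\ge y(0)-C_1t+C_2\int_0^t y^{1+\theta}$ via a nonlinear Gronwall lemma; the blow-up mechanism is the ODE $y'\gtrsim y^{1+\theta}$. You instead test against $\rho(x)$ and $\rho(x)a^{1-\gamma}$, exploit that $\psi\ge c_0$ on a fixed interval, and show that with data concentrated near small $a=a_0$ the local mass $M(t)=\int\!\int\rho u$ stays above $\mu/2$ while the moment $W(t)=\int\!\int\rho a^{1-\gamma}u$ decreases at a strictly positive \emph{linear} rate, forcing $W$ to vanish in a time $T_W\sim a_0^{1-\gamma}$ that is made smaller than the window $T_1\sim a_0^{-1}$ on which the mass estimate holds. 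Since $W=0$ with $M>0$ is impossible for a nonnegative $L^1$ function, the solution cannot exist past $T_W$. This linear-depletion virial is more elementary than the paper's superlinear one (no H\"older step, no nonlinear Gronwall), at the cost of giving only an upper bound on the existence time rather than a quantitative blow-up rate for a moment.

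One step in your conclusion of (c) is not quite justified as written: you assert that $W(t)\to0$ as $t\uparrow T^{*}$, and then derive the $L^p$ blow-up by H\"older on a thin slab $\{a\le\eta\}$. But $W$ is merely decreasing and bounded below by $0$ on $[0,T^{*})$, and $T^{*}$ could \emph{a priori} be strictly less than $T_W$, in which case $W$ converges to a strictly positive limit and the slab concentration does not follow. The correct way to close the argument is exactly the parenthetical remark you make: if $\sup_{t<T^{*}}\|u(t)\|_{L^p}<\infty$, then by the local existence theory (Proposition~\ref{existence}-type continuation) the solution can be extended past $T^{*}$, hence past $T_W$, at which point the inequality $\ddt W\le-(1-\gamma)c_0\mu/4$ on $[0,T_1]\supset[0,T_W]$ forces $W$ negative, contradiction; this gives $\limsup_{t\to T^{*}}\|u(t)\|_{L^p}=+\infty$ without needing $W\to0$. (The paper's own proof relies on the same implicit continuation criterion; the subtlety there is that local existence is established under a support condition $\supp u_0\subset\R\times[a_0,A_0]$, so one must also argue that, as long as the $L^p$ norm is controlled, the support of $u(t)$ remains bounded away from $a=0$, which is where the degenerate coefficient threatens the continuation. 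Flagging this point explicitly, as you do in your ``main obstacles'', is appropriate.) The other technical point you correctly flag — justifying the test function $\rho\,a^{1-\gamma}$, whose $a$-derivative is singular at $a=0$, at the level of weak solutions — is handled as you propose, by regularising and verifying that the boundary terms vanish; the paper does the analogue with $(\epsilon+a)^{-m}$.
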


\subsection{Structure of the paper}

The remainder of this paper is organised as follows. In
Section~\ref{sec:line} we study the case where the diffusion
coefficient changes on the whole real line and $\lambda = 0$. If
$f(a)= a ^\gamma$, we show that $\gamma=1$ plays a critical role
in the problem. In Section~\ref{sec:interval} we consider the
case where the diffusion coefficient changes on an interval and
$\lambda = 0$. We prove existence and uniqueness if
$\gamma \ge 1$ and blow-up of the solution if $\gamma \in
(0,1)$. In Section~\ref{sec:autre_models}, we consider variants
of~\eqref{Pacman_prototip} in three directions: (1) with a
nonlinear diffusion operator, (2) with logistic memory (dumping),
and finally in two dimensions in Subsection~\ref{sec:2d}.

\section{Acceleration-deceleration on $\R$}
\label{sec:line}

In this section we prove Theorem \ref{pacman:droite} regarding
the case where the acceleration-deceleration zone is the whole
real axis:
\begin{equation}
  \label{pacman:droitei}
  \begin{cases}
    \partial_{t}u=a\partial_{xx}^{2}u+ \partial_{a}\left(
      f(a)u\right), & x \in \R, \ a\ge 0, \ t>0,\\[2mm]
    u_{|t=0}=u_{0}, & x \in \R, \ a\ge 0.
  \end{cases}
\end{equation}

We start by defining weak solutions to~\eqref{pacman:droite}.
\begin{defi}
  \label{def:weak}
  Let $A_0,T>0$ and
  $u_{0}\in L^{1}\left( \mathbb{R}^{2}\right) \cap L^{p}\left(
    \mathbb{R}^{2}\right) $ such that
  \begin{equation*}
    \operatorname*{Supp}u_{0}\left( x,a\right)
    \subset\mathbb{R}\times \lbrack0,A_{0}]\text{ and
    }u_{0}\geq0.
  \end{equation*}
  A function
  $u\in C(\mathbb{[}0,T\mathbb{)};L^{1}(\mathbb{R}^{2}))\cap
  C(\mathbb{[}0,T\mathbb{)};L_{\mathrm{weak}}^{p}(\mathbb{R}^{2}))$ such
  that
  \begin{align*}
    & u \in
      L^{\infty}(\mathbb{(}0,T\mathbb{)};L^{p}(\mathbb{R}^{2})),a\partial
      _{x}u^{\frac{p}{2}}\in L^{2}\left(  \left(  0,T\right)
      \times\mathbb{R}^{2}\right) ,\\
    & \operatorname*{Supp}u\left( t,x,a\right)
      \subset\mathbb{R}\times \lbrack0,A_{0}]\text{ and }u\left(
      t,x,a\right) \geq0\text{,}
  \end{align*}
  verifying~\eqref{pacman:droite} in the sense of distributions
  is called a weak solution to~\eqref{pacman:droite} on
  $\left( 0,T\right) $. A function
  $u\in C(\mathbb{R}_{+};L^{1}(\mathbb{R}^{2}))\cap
  C(\mathbb{R}_{+}%
  ;L_{\mathrm{weak}}^{p}(\mathbb{R}^{2}))$ is called a global weak
  solution to~\eqref{pacman:droite} if for any $T>0$ its
  restriction to $\left( 0,T\right) $ is a weak solution
  to~\eqref{pacman:droite} on $\left( 0,T\right) $.
\end{defi}

Weak solutions in the sense of Definition~\ref{def:weak} are
mass-preserving:
\begin{equation*}
  M =\int_\R \int_{\R_+} u_0(x,a)\dxa = \int_\R \int_{\R_+}
  u(t,x,a)\dxa.
\end{equation*}

Let us first prove that non-negativity is preserved.
\begin{lemm}
  Assume that $u$ is a weak solution to~\eqref{pacman:droite}
  with initial datum $u_0$ non-negative almost everywhere.  Then
  $|u(t,\cdot)|=u(t, \cdot)$ almost everywhere for later times.
\end{lemm}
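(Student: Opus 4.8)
The plan is a renormalisation (Kato-type) estimate on the negative part $u^-:=\max(-u,0)$, designed to show that it stays identically zero. Fix $T>0$ and choose a family of $C^2$ convex functions $\beta_\varepsilon\ge 0$ approximating $s\mapsto s^-$ as $\varepsilon\to 0$, with $\beta_\varepsilon''\ge 0$, $\beta_\varepsilon'\to-\mathbf 1_{\{\cdot<0\}}$ pointwise, $|\beta_\varepsilon'|\le 1$, and such that $s\beta_\varepsilon'(s)-\beta_\varepsilon(s)$ is supported in $\{|s|\le\varepsilon\}$ and $O(\varepsilon)$ there (for instance $\beta_\varepsilon(s)=0$ for $s\ge\varepsilon$ and $\beta_\varepsilon(s)=-s$ for $s\le-\varepsilon$, smoothly convex in between). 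First I would test the equation against $\chi_R(x)\,\beta_\varepsilon'(u)$, with $\chi_R$ a spatial cut-off; since the solution only enjoys $u\in L^\infty_t L^p$, the dissipation estimate, and compact support in $a$, this has to be justified through a routine mollification of $\beta_\varepsilon'(u)$ in $x$, the $a$-integrations causing no trouble.

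With this identity in hand I would estimate its two terms. Integrating the diffusion contribution by parts in $x$ gives
\begin{equation*}
  \int_{\R}\int_{\R_+}\chi_R\,\beta_\varepsilon'(u)\,a\,\partial_{xx}^2 u\,\dxa
  =-\int_{\R}\int_{\R_+}a\,\chi_R\,\beta_\varepsilon''(u)\,(\partial_x u)^2\,\dxa
  -\int_{\R}\int_{\R_+}a\,\chi_R'\,\beta_\varepsilon'(u)\,\partial_x u\,\dxa ,
\end{equation*}
where the first term is $\le 0$ because $a\ge 0$ and $\beta_\varepsilon''\ge 0$ — this is where the degeneracy of the diffusion at $a=0$ is harmless — and the second (commutator) term is, after integration in time, controlled by a negative power of $R$ times the dissipation norm of $a\,\partial_x u$, hence disappears as $R\to\infty$. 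For the drift term I would write $\partial_a(f(a)u)=f'(a)u+f(a)\partial_a u$, use $\beta_\varepsilon'(u)\,\partial_a u=\partial_a\beta_\varepsilon(u)$, and integrate by parts in $a$ over $(0,\infty)$, exploiting the compact $a$-support at $+\infty$, obtaining
\begin{equation*}
  \int_{\R}\int_{\R_+}\chi_R\,\beta_\varepsilon'(u)\,\partial_a(f(a)u)\,\dxa
  =\int_{\R}\int_{\R_+}\chi_R\,f'(a)\,\big[u\beta_\varepsilon'(u)-\beta_\varepsilon(u)\big]\,\dxa
  -\int_{\R}\chi_R\,f(0)\,\beta_\varepsilon\big(u(t,x,0)\big)\,\dx .
\end{equation*}
Since $f(a)=\pm a^\gamma$: if $\gamma>0$ then $f(0)=0$, so the $\{a=0\}$ trace drops, and the bulk term is $O(\varepsilon)$ because its integrand is supported in $\{|u|\le\varepsilon\}$ while $\int_0^{A_0}|f'(a)|\,\da=A_0^\gamma<\infty$ even when $\gamma\in(0,1)$ — it is for exactly this reason that an $L^1$-type renormalisation is used rather than an $L^2$ one, which would pair $f'$ with the possibly large $(u^-)^2$; if $\gamma=0$ then $f'\equiv 0$ and the bulk term is absent, while the $\{a=0\}$ trace either has the favourable sign ($f\equiv+1$) or vanishes because the $a$-characteristics move mass away from $a=0$, so that $u(t,\cdot,0)=0$ ($f\equiv-1$).

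Collecting the estimates, integrating in $t$ over $[0,\tau]$, letting $\varepsilon\to 0$ (with $R$ fixed) and then $R\to\infty$ — using $\int\int\chi_R\beta_\varepsilon(u)\to\int\int u^-$ by monotone/dominated convergence, together with $\int\int u_0^-=0$ and $u\in C([0,T);L^1)$ — I would obtain $\int_{\R}\int_{\R_+}u^-(\tau,x,a)\,\dxa=0$ for every $\tau\in[0,T)$; equivalently $u(\tau,\cdot)=|u(\tau,\cdot)|$ a.e. The main obstacle is purely technical: making the chain rule and the integrations by parts for $\beta_\varepsilon(u)$ rigorous with only the weak regularity available to weak solutions (hence the mollification and the $x$-cut-off), and organising the computation so that the only trace liable to appear on the boundary $\{a=0\}$ — where no boundary condition is prescribed and where the diffusion degenerates — is multiplied by the harmless factor $f(0)$ or carries a good sign.
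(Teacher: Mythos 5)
Your proposal is correct and takes essentially the same approach as the paper: the paper's one-line argument (that $|u|$ is a subsolution by Kato's inequality $\operatorname{sgn}(u)\,\partial^2_{xx}u\le\partial^2_{xx}|u|$ together with $\operatorname{sgn}(u)\,\partial_a(f(a)u)=\partial_a(f(a)|u|)$, hence $|u|-u$ is a non-negative subsolution with non-increasing total mass) is precisely the formal version of the $\beta_\varepsilon$-renormalisation on $u^-=\tfrac12(|u|-u)$ that you carry out. You simply make rigorous, via the convex approximations $\beta_\varepsilon$, the spatial cut-off $\chi_R$, and the case discussion for the $a=0$ trace, what the paper states tersely through $\operatorname{sgn}(u)$ and $|u|$.
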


\begin{proof}
  If $u$ is solution in $L^1$ then $|u|$ is subsolution in $L^1$
  since $\mbox{sgn}(u) \partial_{xx}^2 u \le \partial_{xx}^2 |u|$
  and
  $\mbox{sgn}(u) \partial_a ( f(a) \, u ) = \partial_a ( f(a) \,
  |u| ) $. Hence $|u|-u$ is a subsolution, and we calculate
  \begin{align*}
    \ddt \int_\R \int_{\R_+} \left( |u| - u \right)
    \dxa \le 0,
  \end{align*}
  which proves the claimed conclusion.
\end{proof}

We then use the orientation of the drift to obtain estimates on
the $(x,a)$-support of the solution.
\begin{lemm}
  \label{lem:support_compact}
  Assume $u$ is a weak solution to~\eqref{pacman:droite} with
  $u_0 \ge 0$.  Assume in addition that $f(a) \ge 0$
  (deceleration) and $\mbox{supp}(u_0) \subset \R \times [0,A_0]$
  for some $A_0>0$, then
  $\mbox{supp}(u_t) \subset \R \times [0,A_0]$ up to the existence
  time. Similarly, in the case $f(a) \le 0$ (acceleration) and
  $\mbox{supp}(u_0) \subset \R \times [A_0,+\infty)$ for some
  $A_0>0$, then $\mbox{supp}(u_t) \subset \R \times [A_0,+\infty)$
  for some $A_0>0$ up to the existence time.
\end{lemm}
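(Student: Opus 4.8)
The plan is to exploit the one-sidedness of the transport in the $a$-variable, together with the degeneracy of the diffusion at $a=0$ (the diffusion coefficient is exactly $a$, so there is no $x$-diffusion on the slice $\{a=0\}$ and hence no mechanism creating mass at negative $a$, which in any case is outside the domain). The cleanest route is a duality/test-function argument: I would test the weak formulation against a well-chosen smooth function $\varphi(t,x,a)$ that is transported by (a regularization of) the backward characteristic flow of the drift $a \mapsto -f(a)$ in the $a$-direction, and that vanishes on the region where I want to show $u$ vanishes. Alternatively, and perhaps more transparently, I would run a Stampacchia-type truncation argument directly on $u$.

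First I would set up the truncation functional. In the deceleration case $f(a)\ge 0$, I want to show $u(t,x,a)=0$ for $a>A_0$. Fix a smooth convex nonnegative function $\beta:\R\to\R_+$ with $\beta(s)=0$ for $s\le 0$ and $\beta'(s)>0$ for $s>0$, and consider
\begin{equation*}
  \Phi(t) := \int_\R \int_{\R_+} \beta\big(a-A_0\big)\, u(t,x,a)\, \dxa .
\end{equation*}
Since $u_0$ is supported in $a\le A_0$ we have $\Phi(0)=0$, and $\Phi(t)\ge 0$ because $u\ge 0$ (by the previous lemma) and $\beta\ge 0$. Differentiating in time and using the equation in the distributional sense,
\begin{equation*}
  \ddt \Phi(t) = \int_\R \int_{\R_+} \Big[ a\,\partial_{xx}^2 u + \partial_a\big(f(a)u\big)\Big]\, \beta(a-A_0)\, \dxa .
\end{equation*}
The first term vanishes after integrating by parts twice in $x$ since $\beta(a-A_0)$ is $x$-independent (here I would use the regularity $a\,\partial_x u^{p/2}\in L^2$ from the definition of weak solution to justify the integrations by parts, together with an approximation of $\beta$ by functions with bounded support). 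For the second term I integrate by parts in $a$; the boundary term at $a=0$ is killed by $\beta(a-A_0)=0$ there (and at $a=+\infty$ by the support/integrability of $u$), giving
\begin{equation*}
  \ddt \Phi(t) = -\int_\R \int_{\R_+} f(a)\, u\, \beta'(a-A_0)\, \dxa \le 0 ,
\end{equation*}
because on the set $\{a>A_0\}$ we have $f(a)=a^\gamma\ge 0$, $u\ge 0$, and $\beta'\ge 0$, while on $\{a\le A_0\}$ we have $\beta'(a-A_0)=0$. Hence $\Phi(t)\le \Phi(0)=0$, so $\Phi(t)\equiv 0$, which forces $u(t,x,a)=0$ for a.e. $a>A_0$, i.e. $\supp u_t \subset \R\times[0,A_0]$. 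The acceleration case $f(a)=-a^\gamma\le 0$ with $\supp u_0\subset \R\times[A_0,+\infty)$ is symmetric: I take $\beta$ with $\beta(s)=0$ for $s\ge 0$ instead, form $\Phi(t)=\int\int \beta(a-A_0)u\,\dxa$, and the drift now pushes mass toward larger $a$, so the sign works out the same way (the boundary contribution at $a=0$ does not even arise since we stay in $a\ge A_0>0$).

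The main technical obstacle is the justification of the integration by parts, in particular the vanishing of the $a$-flux boundary term at $a=0$ and the control of the $x$-term at $|x|\to\infty$, given that weak solutions in the sense of Definition~\ref{def:weak} are only known to lie in $L^\infty_t L^p_{x,a}$ with $a\,\partial_x u^{p/2}\in L^2$ and $u\in C_t L^1_{x,a}$. I would handle this by working first with a mollified solution or with a cutoff $\chi_R(x)$ and $\beta$ replaced by a compactly supported smooth approximation $\beta_n$, deriving the differential inequality for the regularized functional, and then passing to the limit using the $L^1$-continuity in time and the degenerate-diffusion energy bound to control the commutator and cutoff error terms; the factor $a$ in front of $\partial_{xx}^2 u$ is actually helpful near $a=0$ and the support assumption $a\le A_0$ (resp. $a\ge A_0$) is propagated self-consistently along the Stampacchia iteration, so no circularity arises.
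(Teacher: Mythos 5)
Your proof takes essentially the same route as the paper: both test the weak formulation against a nonnegative, nondecreasing, $x$-independent function of $a$ vanishing for $a\le A_0$ (the paper's $\varphi(a)$, your $\beta(a-A_0)$), observe that the diffusion term drops out since the test function is $x$-independent while the drift term gives a nonpositive contribution, and conclude from $\Phi(0)=0$, $\Phi\ge 0$, and $\Phi'\le 0$ that $\Phi\equiv 0$. One small remark: your sign after the $a$-integration by parts, $\ddt\Phi = -\int f\,u\,\beta'(a-A_0)\,\dxa\le 0$, is the correct one; the paper's displayed identity appears to have a sign slip (it writes $+\int f\,u\,\varphi'$), though the intended conclusion is the same.
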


\begin{proof}
  Note first that $u_0 \ge 0$ implies $u \ge 0$ by standard
  arguments. Consider then any non-negative non-decreasing function
  $\varphi=\varphi(a)$ smooth on $\R_+$ with support included in
  $(A_0,+\infty)$. Then
  \begin{align*}
    \ddt \int_{\R} \int_{\R_+} u(t,x,a) \varphi(a) \dd
    x \dd a = \int_{\R} \int_{\R_+} f(a) u(t,x,a)
    \varphi'(a) \dxa \le 0,
  \end{align*}
  which proves that $u_t \varphi =0$ for later times. This proves
  that $u_t=0$ on $\R \times [A_0,+\infty)$ for all times.
\end{proof}

\begin{proof}[Proof of Theorem~\ref{th:edp}]
  This is a standard drift-diffusion equation and we omit the
  details regarding the existence of solutions, see for
  instance~\cite{kawashima,AG}.
	 
  In the cases (a) and (b), we prove the propagation of $L^p$
  bounds for all times, which is the crucial a priori
  estimate. To prove that solutions blow-up in finite time in the
  supercritical case (c), we show that for an appropriate value
  of $m>0$, the moment $\int_{\R_+} a^m u(t,x,a) \dd a $ becomes
  infinite in finite time.
  	
  \noindent
  \textbf{Case (a).} Given $p \in [1,+\infty)$, we calculate
  \begin{align*}
    \ddt \int_\R \int_{\R_+} |u|^p \dd x \dd a \le
    & - p(p-1) \int_{\R} \int_{\R_+} a \left| \partial_x u
      \right|^2 |u|^{p-2} \dd x \dd a \\
    & - \gamma(p-1) \int_\R \int_{\R_+} a^{\gamma-1} |u|^p
      \dxa  \le 0,
  \end{align*}
  which proves that $L^p$ norms remains finite for all times
  provided they are finite initially, and there is no blow-up in
  finite time. By applying the same argument to the modulus of
  the difference of two solutions one proves similarly uniqueness
  in $L^p$.
	
  \indent \textbf{Case (b).} Given $p \in [1,+\infty)$, we
  calculate
  \begin{equation*}
    \ddt \iint_{\R\times \R_+} |u|^p \dd x \dd a
     \le - p(p-1) \iint_{\R\times \R_+} a \left| \partial_x u
      \right|^p  \dd x \dd a 
     + \gamma (p-1)  \iint_{ \R\times \R_+} a^{\gamma-1} |u|^p
      \dxa.
  \end{equation*}
  Since $\gamma \ge 1$, the weight $a^{\gamma-1}$ is bounded on
  $[0,A_0]$, using Lemma~\ref{lem:support_compact}, and we deduce
  \begin{equation*}
    \ddt \iint_{ \R\times \R_+} |u|^p \dxa \le \gamma (p-1)
    A_0^{\gamma-1} \iint_{ \R\times \R_+} |u|^p \dxa,
  \end{equation*}
  which implies the propagation of $L^p$ norms by Gronwall lemma
  and the bound in the statement.
	
  \indent \textbf{Case (c).} The conclusion follows from
  \begin{align*}
    \ddt \int_\R \int_{\R_+} a^{1-\gamma} u(t,x,a) \dxa
    & = \int_\R \int_{\R_+} a^{1-\gamma} \partial_a \left(
      a^\gamma u \right) \dxa \\
    &   = - (1-\gamma) \int_\R \int_{\R_+} u(t,x,a) \dxa \\
    & = - (1-\gamma) \int_\R \int_{\R_+} u(0,x,a) \dxa,
  \end{align*}
  where we have used the conservation of mass
  $\iint u(t,x,a) \dxa = \iint u_0(x,a) \dxa$.
\end{proof}	

\section{Acceleration-deceleration on an interval}
\label{sec:interval}

In this section we study, for $f$ locally Lipschitz on
$(0,\infty)$,
\begin{equation}
  \begin{cases}
    \partial_{t}u=a\partial_{xx}^{2}u+\psi\left( x\right)
    \partial_{a}\left(
      f(a)u\right),  & x \in \R, \ a\ge 0, \ t>0,\\[2mm]
    u_{|t=0}=u_{0}, & x \in \R, \ a\ge 0,
  \end{cases}
  \label{Pacman_prototipi}
\end{equation}
with a localization function that satisfies
\begin{equation*}
  \operatorname*{Supp}\psi\left( x\right) \subset\left[
    -1,1\right] \quad \text{ and } \quad
  \int_{\mathbb{R}}\psi\left( x\right) \dx=1.
\end{equation*}

\subsection{Comparison with the acceleration-deceleration at a
  point}

Formally, solutions to~\eqref{Pacman_prototip}, for a family of
functions $\psi_n$ that approximates the Dirac mass at zero,
converge to the model introduced in~\cite{M} which reads
\begin{equation}
  \label{eq:dirac}
  \begin{cases}
    \partial_{t}u=a\partial_{xx}^{2}u+\delta_{x=0}\partial_{a}\left(
      f(a)u\right) ,
    & x \in \R, \ a\ge 0, \ t>0,\\[2mm]
    u_{|t=0}=u_{0}, & x \in \R, \ a\ge 0,
  \end{cases}
\end{equation}
However, the method of~\cite{M} does not apply. Consider for
instance the case $f(a)= a^\gamma$ with $\gamma \in (0,1)$. In
\cite{M}, it was shown that any solution $u$ to~\eqref{eq:dirac}
satisfies
\begin{equation}
  \int_{0}^{\infty}a^{\frac{1}{2}}u\left(  t,0,a\right)  \da<+\infty
  \label{quantitty}
\end{equation}
for all times, and that for any $m\in (0,1/2)$ there is
$T^{\ast}_m \in (0,+\infty)$ such that
\begin{equation*}
  \lim_{t\rightarrow T^{\ast}_m}\int_{0}^{\infty}a^{m}u\left(
    t,0,a\right) \da=+\infty.
\end{equation*}

Looking now at the equation~\eqref{Pacman_prototip}, the
equivalent of the quantity \eqref{quantitty} is
\begin{equation*}
  \int_{0}^{\infty}\int_{-\infty}^{\infty}\psi\left( x\right)
  a^{m}u\left( t,x,a\right) \dxa,
\end{equation*}
and, when $m=1/2$, we are able to find a bound similar
to~\eqref{quantitty}:
\begin{lemm}
  Assume that $u$ is a weak solution to~\eqref{Pacman_prototip}
  with $f(a)=a^\gamma$ and $\gamma \in (0,1)$ (in the sense of
  Definition~\ref{def:weak}), then
  \begin{equation*}
    \int_\R \int_{\R_+} \psi\left( x\right)
    a^{\frac12}u\left( t,x,a\right) \dxa < \infty.
  \end{equation*}
\end{lemm}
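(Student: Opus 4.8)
The goal is to control the weighted integral $\int_\R\int_{\R_+}\psi(x)\,a^{1/2}u(t,x,a)\,\dxa$, and the natural route is a differential-inequality (Gronwall-type) argument using the equation~\eqref{Pacman_prototip}. First I would compute $\ddt\int_\R\int_{\R_+}\psi(x)\,a^{1/2}u\,\dxa$ by inserting the PDE. The diffusion term contributes $\int\int \psi(x)\,a^{3/2}\,\partial_{xx}^2 u\,\dxa$, which after integrating by parts twice in $x$ becomes $\int\int \psi''(x)\,a^{3/2}u\,\dxa$; since $\psi$ is smooth and compactly supported in $[-1,1]$ and the $a$-support of $u$ stays in $[0,A_0]$ by the analogue of Lemma~\ref{lem:support_compact} (deceleration case, $f\ge0$), this term is bounded by $C(\psi,A_0)\int\int u\,\dxa = C(\psi,A_0)\,M$ using conservation of mass. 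The drift term contributes $\int\int \psi(x)\,a^{1/2}\,\psi(x)\,\partial_a(a^\gamma u)\,\dxa$; integrating by parts in $a$ (the boundary term at $a=0$ vanishes because $a^{1/2+\gamma}$ does, and at $a=+\infty$ because of compact support) yields $-\tfrac12\int\int \psi(x)^2 a^{\gamma-1/2}u\,\dxa$. Since $\gamma\in(0,1)$ we have $\gamma-1/2 > -1/2$, so $a^{\gamma-1/2}$ might be singular at $a=0$ when $\gamma<1/2$, but $a^{\gamma-1/2}$ is still locally integrable and, more to the point, this term has a favorable sign: it is $\le 0$. Hence
\begin{equation*}
  \ddt \int_\R \int_{\R_+} \psi(x)\,a^{1/2}u(t,x,a)\,\dxa \le C(\psi,A_0)\,M,
\end{equation*}
which integrates to a finite bound on $[0,T]$ for any $T<\infty$, giving the claim.

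**Key steps, in order.** (i) Invoke the deceleration support lemma to know $\supp u(t,\cdot)\subset\R\times[0,A_0]$, so all $a$-weights are evaluated on a bounded interval and the only delicate point is behaviour near $a=0$. (ii) Justify that $\int\int\psi(x)a^{1/2}u_0\,\dxa<\infty$ initially — immediate since $a^{1/2}\le A_0^{1/2}$ on the support and $u_0\in L^1$. (iii) Differentiate in $t$, using the weak formulation with test function (a regularised version of) $\psi(x)a^{1/2}$; one should work with a truncated/smoothed weight $\psi(x)\,\chi_\eps(a)$ with $\chi_\eps(a)\to a^{1/2}$ and pass to the limit, to stay within the class of admissible test functions of Definition~\ref{def:weak} and to handle the mild singularity of $a^{\gamma-1/2}$ and of derivatives of $a^{1/2}$ at $a=0$. (iv) Integrate by parts: twice in $x$ for the diffusion term (no boundary terms, $\psi$ compactly supported), once in $a$ for the drift term (boundary terms vanish as explained). (v) Discard the non-positive drift contribution, bound the diffusion contribution by $\|\psi''\|_\infty A_0^{3/2} M$, and apply Gronwall (here trivial, as the right-hand side is constant) to conclude finiteness on every finite time interval, hence for all times the solution exists.

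**Main obstacle.** The routine part is the two integrations by parts; the genuine subtlety is the regularity near $a=0$. The weight $a^{1/2}$ is not smooth at $a=0$ and its use as a test function is not literally covered by Definition~\ref{def:weak}; moreover the resulting term $a^{\gamma-1/2}u$ is singular when $\gamma<1/2$. The clean way around this is to first establish the bound for the truncated weight $\psi(x)(a+\eps)^{1/2}$ (or $\psi(x)\min(a^{1/2},\eps^{-1})$ type cut-offs), for which all manipulations are rigorous, obtain an $\eps$-uniform bound using that the dangerous drift term keeps its good sign for every $\eps>0$, and then let $\eps\to0$ via monotone convergence. One should also check that the term $a\,\partial_x u^{p/2}\in L^2$ from Definition~\ref{def:weak} (with $p=1$, i.e. $a^{1/2}\partial_x(a^{1/2}u^{1/2})$ controlled, or rather the $p=2$ version) provides enough regularity to make the $x$-integrations by parts legitimate; alternatively one argues by density from smooth solutions of a regularised problem and passes to the limit, which is the approach implicitly used for the existence theory cited from~\cite{kawashima,AG}.
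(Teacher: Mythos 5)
Your proposal is a correct idea but takes a genuinely different route from the paper. You differentiate the weighted integral in time and close a Gronwall inequality (energy-estimate approach), whereas the paper works with the Duhamel/mild formulation against the heat flow, writes $u(t,\cdot,a)$ via the Gaussian kernel with diffusivity $a$, tests against $a^{1/2}\psi$, and integrates by parts in $a$ inside the Duhamel integral. The key mechanism is the same in both arguments: for the exponent $m=1/2$, the integration by parts in $a$ of the drift term produces a contribution with a favourable (non-positive) sign that can be discarded. The two approaches, however, differ in how the diffusion term is handled, and this is where the trade-offs appear. Your method integrates by parts twice in $x$ and must control $\int\!\!\int \psi''(x)\,a^{3/2}u\,\dxa$; this requires $\psi''\in L^\infty$, which is \emph{not} part of the lemma's hypotheses (the lemma only assumes support in $[-1,1]$ and unit mass, and elsewhere the paper considers rough profiles such as $\psi_\delta=(2\delta)^{-1}\mathds{1}_{[-\delta,\delta]}$), and it also requires enough regularity of $u$ for the two $x$-integrations by parts, which you correctly flag as needing a regularisation argument. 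The paper's kernel representation never differentiates $\psi$ and absorbs the diffusion into the Gaussian kernel, thereby sidestepping both issues; in return it needs $u_0\in L^\infty_xL^1_a(a^{1/2})$, which is likewise an extra assumption not in Definition~\ref{def:weak}. A further quantitative difference: your differential inequality yields a bound that grows linearly in $t$, $\int\!\!\int\psi\,a^{1/2}u(t)\le\int\!\!\int\psi\,a^{1/2}u_0 + t\,\|\psi''\|_\infty A_0^{3/2}M$, whereas the paper's proof gives a bound uniform in $t$. Both suffice for the stated conclusion of finiteness (and in fact, under the support hypothesis $\operatorname*{Supp}u_t\subset\R\times[0,A_0]$ of Definition~\ref{def:weak} the bound $\|\psi\|_\infty A_0^{1/2}M$ is already immediate — the interest of the argument is the structural identification of $m=1/2$ as the critical exponent, which you recover correctly).
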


\begin{proof}
  Using a Duhamel formulation along the heat flow, we
  have
  \begin{equation}
    \label{form_mild}
    u\left( t,\cdot,a\right)
    =\frac{\sqrt{\pi}}{\sqrt{ta}}e^{\frac{-\left\vert
          \cdot\right\vert ^{2}}{4ta}}\ast_{x}u_{0}\left(
      \cdot,a\right) +\int_{0}%
    ^{t}\frac{\sqrt{\pi}}{\sqrt{\tau a}}e^{\frac{-\left\vert
          \cdot\right\vert ^{2}}{4\tau
        a}}\ast_{x}\partial_{a}\left( \psi
      a^{\gamma}u)\right),
  \end{equation}
  which implies, after integration against $a^m \psi$,
  \begin{align*}
    &\int_\R \int_{\R_+} \psi\left(  x\right)
    a^{m}u\left( t,x,a\right)  \dx
     =\int_\R \int_{\R_+} \psi\left(
      x\right)  a^{m} \left(
      \frac{\sqrt{\pi}}{\sqrt{ta}}e^{\frac{-\left\vert \cdot
      \right\vert ^{2}}{4ta}} \ast_{x}u_{0}\left(  \cdot,a\right)
      \right) \left(  x\right) \dxa \\
    & \qquad +\int_{0}^{t} \int_\R \int_{\R_+} \psi\left(
      x\right) a^{m} \left( \frac{\sqrt{\pi}}{\sqrt{\tau
      a}}e^{\frac{-\left\vert \cdot\right\vert^{2}}{4\tau
      a}}\ast_{x}\partial_{a}\left(  \psi a^{\gamma}u)\right)
      \right)(x) \dtau \dxa.
  \end{align*}
	
  Observe that the first term in the previous right hand side is
  bounded for $u_0 \in L_{x}^{\infty} L_{a}^{1}(a^m)$:
  \begin{equation*}
    \int_\R \int_{\R_+} \psi\left( x\right)
    a^{m} \left(\frac{\sqrt{\pi}}{\sqrt{ta}}e^{\frac{-\left\vert
            \cdot\right\vert ^{2}}{4ta}}
      \ast_{x}u_{0}\left( \cdot,a\right)\right) \left(
      x\right) \dxa\leq\int _{0}^{+\infty}a^{m}\left\Vert
      u_{0}\left( \cdot,a\right) \right\Vert
    _{L_{x}^{\infty}}\da<\infty.
  \end{equation*}
  
  Next, we compute
  \begin{align*}
    & \int_{0}^{t} \int_\R \int_{\R_+} \psi\left(
      x\right) a^{m} \left( \frac{\sqrt{\pi}}{\sqrt{\tau
      a}}e^{\frac{-\left\vert \cdot\right\vert^{2}}{4\tau
      a}}\ast_{x}\partial_{a}\left(  \psi a^{\gamma}u\right)
      \right)(x) \dtau \dxa  \\
    &  =\sqrt{\pi}\int_{0}^{t} \int_\R \int_{\R_+} 
      \int_\R \psi\left(  x\right)  a^{m-\frac{1}{2}}\frac
      {e^{\frac{-\left\vert x-y\right\vert ^{2}}{4\tau a}}}{\sqrt{\tau}}\partial
      _{a}\left[  \psi\left(  y\right)  a^{\gamma}u\left(
      \tau,y,a\right)  \right] \dtau \dxa \dy \\
    &  =\left(  \frac{1}{2}-m\right)
      \sqrt{\pi}\int_{0}^{t} \int_\R \int_{\R_+} \int_\R a^{\gamma+m-\frac{3}{2}}
      \frac{e^{\frac{-\left\vert x-y\right\vert ^{2}}{4\tau
      a}}}{\sqrt{\tau}} \psi\left(  x\right)  \psi\left(
      y\right)  u\left(  \tau,y,a\right) \dtau \dxa \dy \\
    &  -\sqrt{\pi}\int_{0}^{t} \int_\R \int_{\R_+} \int_\R
      a^{\gamma+m-\frac{1}{2}}\frac{\partial}{\partial
      a}\left(\frac{e^{\frac{-\left\vert x-y\right\vert
      ^{2}}{4\tau a}}}{\sqrt{\tau}}\right) \psi\left(  x \right)
      \psi\left(  y \right)  u\left(  \tau,y,a\right) \dtau \dxa \dy.
  \end{align*}
  The last term in the previous equality can be written
  \begin{equation*}
    - \sqrt{\pi} \int_{0}^{t}\int_\R \int_{\R_+} \int_\R
    \psi\left(  x\right)  a^{\gamma+m-\frac{1}{2}}\frac{e^{\frac
      {-\left\vert x-y\right\vert ^{2}}{4\tau
      a}}}{\sqrt{4\tau}}\frac{\left\vert x-y\right\vert
      ^{2}}{4\tau a^{2}}\psi\left(  y\right)  u\left(
      \tau,y,a\right) \dtau \dxa \dy \leq 0.
  \end{equation*}
  Thus, for $m=\frac{1}{2}$, we have
  \begin{equation*}
    \int_\R \int_{\R_+}  \psi\left( x\right)
    a^{\frac{1}{2}}u\left( t,x,a\right)
    \dxa\leq\int_{0}^{+\infty}a^{\frac{1}{2}}\left\Vert
      u_{0}\left( \cdot,a\right) \right\Vert
    _{L_{x}^{\infty}}\da<\infty
  \end{equation*}
  which concludes the proof.
\end{proof}

However, unlike for equation~\eqref{eq:dirac}, one cannot hope
that the quantity
\begin{equation*}
  \int_\R \int_{\R_+} \psi\left( x\right) a^{m}u\left( t,x,a\right) \dxa
\end{equation*}
becomes infinite for $m\in\left( 0,\frac{1}{2}\right) $ since, if
$u$ is compactly supported in $a$ initially, say $u_0(x,a)=0$ for
$a>A_0$ with $A_0>0$, it is controlled by the mass:
\begin{equation*}
  \int_\R \int_{\R_+} \psi\left( x\right)
  a^{m}u\left( t,x,a\right) \dxa\leq\left\Vert \psi\right\Vert
  _{L^{\infty}}A_0^{m}\int_\R \int_{\R_+} u\left( t,x,a\right) \dxa.
\end{equation*}
Of course, this estimate degenerates for $\psi_n$ converging to a
Dirac mass at $x=0$, since $\| \psi_n \|_\infty \to \infty$. 

\subsection{The cases without finite time singularity}



Denote
$\psi_\delta(x) := (2\delta)^{-1} {\mathds 1}_{x \in
  [-\delta,\delta]}$ and consider
\begin{equation}
  \label{pacman:interval}
  \begin{cases}
    \partial_t u = a \partial^2_{xx} u + \psi_\delta  \partial_a
    \left( f(a) \, u \right),
    & x \in \R, \ a\ge 0, \ t>0,\\[2mm]
    u_{|t=0}=u_{0}, & x \in \R, \ a>0.
  \end{cases}
\end{equation}

\begin{proof}[Proof of Theorem~\ref{Blow-up_viriel} for the cases
  (a) and (b)]
  We prove the a priori estimate, then briefly explain how to
  construct solutions.
  
  \textbf{Case (a).} The estimate follows from
  \begin{align*}
    \ddt \int_\R \int_{\R_+} |u|^p \dxa \le
     & - p(p-1) \iint_{\R \times \R_+} a \left| \partial_x u
      \right|^2 |u|^{p-2} \dxa \\
     & - \frac{\gamma (p-1)}{2\delta} \int_{[-\delta,\delta]} \int_{\R_+}
    a^{\gamma-1} |u|^p \dxa \le  0.
  \end{align*}

  \textbf{Case (b).} We calculate
  \begin{align*}
    \ddt \int_\R \int_{\R_+} |u|^p \dxa
    \le & - p(p-1) \int_\R \int_{\R_+} a \left| \partial_x u
          \right|^2 |u|^{p-2} \dxa \\
    & + \frac{\gamma (p-1)}{2\delta} \int_{x \in [-\delta,\delta]} \int_{a
    \in \R_+} a^{\gamma-1} |u|^p \dxa.
  \end{align*}
  Since $\gamma \ge 1$, the weight $a^{\gamma-1}$ is bounded on
  $[0,A_0]$ by using Lemma~\ref{lem:support_compact}, and we
  deduce
  \begin{align*}
    \ddt \int_\R \int_{\R_+} |u|^p \dxa \le \frac{\gamma (p-1)
    A_0^{\gamma-1}}{2\delta} \int_\R \int_{\R_+} |u|^p \dxa,
  \end{align*}
  which proves the estimate by the Gronwall lemma. When $p>1$, we
  further calculate
  \begin{align*}
    \frac{\D}{\dt} \iint_{ \R\times \R_+} |u|^p \dxa =
    & - \frac{4(p-1)}{p} \iint_{
      \R\times \R_+} a \left|\partial_x \left( u^{\frac{p}2}
      \right) \right|^2\dxa \\
    & + \frac{\gamma(p-1)-m}{2 \delta} \int_{x \in [\delta,\delta]} \int_{a \in \R_+}
    a^{\gamma-1} |u|^p \dxa
  \end{align*}
  and we control, with $I_\var := [x-\var,x+\var]$ and $v :=
  u^{p/2}$, 
  \begin{align*}
    |v(t,x,a)|
    & \le \left| v(t,x,a) - \frac1{|I_\var|} \int_{y \in I_\var}
      v(t,y,a) \right| 
      + \left| \frac1{|I_\var|} \int_{y \in I_\var}
      v(t,y,a) \D  y \right| \\
    & \le \left|  \frac1{|I_\var|} \int_{y \in I_\var}
      \Big( v(t,x,a) - v(t,y,a) \Big) \D  y \right| 
      + \frac{1}{\sqrt{2\var}} \| v(t,\cdot,a) \|_{L^2_x(\R)} \\
    & \le \left|  \frac1{2 \var} \int_{-\var} ^\var \int_x ^y 
      \partial_x v(z,a) \D  y \D  z \right|
      + \frac{1}{\sqrt{2\var}} \| v(t,\cdot,a) \|_{L^2_x(\R)} \\
    &\le \sqrt{2 \var} \left\| \partial_x
      v(\cdot,a) \right\|_{L^2_x(\R)} 
      + \frac{1}{\sqrt{2\var}} \| v(\cdot,a) \|_{L^2_x(\R)}
  \end{align*}
  and we deduce (note that the previous bound is pointwise in $a$)
  \begin{align*}
    \int_{x \in [\delta,\delta]} \int_{a \in \R_+}
    a^{\gamma-1} v^2 \dxa \le  2
    \left\| \var^{1/2} a^{(\gamma-1)/2} \partial_x v
    \right\|^2_{L^2_{x,a}} +  \left\| \var^{-1/2}
    a^{(\gamma-1)/2} v \right\|^2_{L^2_{x,a}}.
  \end{align*}
  We set $\var = \eta a^{(\gamma-1)}$ and, for $\gamma \ge 3/2$
  we have $2(\gamma-1) \ge m+1$ so 
  \begin{align*}
    \int_{x \in [\delta,\delta]} \int_{a \in \R_+}
    a^{\gamma-1} v^2 \dxa \le 2 \delta \eta
    \left\| a^{1/2} \partial_x v \right\|^2_{L^2_{x,a}} 
    + \frac{2 \delta}{\eta} \left\| v \right\|^2_{L^2_{x,a}}
  \end{align*}
  and finally, with $\eta := \gamma^{-1}$,
  \begin{align*}
    &\frac{\D}{\dt} \int_\R \int_{\R_+} |u|^p \dxa 
    \\
    &\le
     (p-1) ( \eta \gamma  - 1 ) \int_\R \int_{\R_+} a
    \left|\partial_x \left( u^{\frac{p}2} \right) \right|^2 \dxa
      + \frac{\gamma (p-1)}{\eta} \int_\R \int_{\R_+}
      |u|^p \dxa \\
    & \le \gamma^2 (p-1) \int_\R \int_{\R_+} |u|^p \dxa
  \end{align*}
  which proves the estimate by the Gronwall lemma.





	
  To construct solutions, one can for instance use the following
  iterative scheme, for some $\epsilon>0$:
  \begin{equation}
    \label{Pacman_weak_prototip_0}
    \begin{cases}
      \partial_{t}\tilde{u}-(\epsilon+\varphi\left( a\right)
      )\partial _{xx}\tilde{u}+\partial_{a}\left( \chi\left(
          a\right) \psi\left( x\right)
        \tilde{u}\right)  =0, & x \in \R, a\ge 0, t>0,\\[2mm]
      \tilde{u}_{|t=0}=u_{0}, & x \in \R, a\ge 0,
    \end{cases}
  \end{equation}
  where $\varphi$, $\chi$ are defined by
  \begin{equation}
    \label{def_phi_chi}
    \varphi\left(  a\right)  =
    \begin{cases}
      0&\text{ if }a\leq0,\\
      a&\text{ if }a\in\lbrack0,A_{0}],\\
      0&\text{ if }a\geq2A_{0},
    \end{cases}
    \quad \textrm{ and } \quad 
    \chi\left(  a\right)  =
    \begin{cases}		
      0&\text{ if }a\leq0,\\
      f(a)&\text{ if }a\in\lbrack0,A_{0}],\\
      f(A_{0})&\text{ if }a\geq A_{0}.
    \end{cases}
  \end{equation}
  Note that
  $\chi^{\prime}\left( a\right) \in L^{1}\left( \mathbb{R}\right)
  \cap L^{\infty}\left( \mathbb{R}\right) $ with the weak
  derivative
  \begin{equation}
    \label{chi_prime}
    |\chi^{\prime}\left(  a\right)|  =
    \begin{cases}
      0&\text{ if }a\leq0,\\
      \gamma a^{\gamma-1}&\text{ if }a\in(0,A_{0}),\\
      0&\text{ if }a\geq A_{0}.
    \end{cases}
  \end{equation}
  We then further approximate by the standard Galerkin
  method~\cite{Book_BCD}:
  \begin{equation}
    \label{Pacman_weak_prototip}
    \begin{cases}
      \partial_{t}\tilde{u}-\mathbb P_{n}\left\{
        (\varepsilon+\varphi\left( a\right)
        )\partial_{xx}\tilde{u}+\partial_{a}\left( \chi\left(
            a\right) \psi\left( x\right) \tilde{u}\right)
      \right\} =0,\\[2mm]
      \tilde{u}_{|t=0}= \mathbb P _{n}u_{0},
    \end{cases}
  \end{equation}
  with 
  \begin{equation*}
    \mathbb P_{n}\tilde u:=\mathcal{F}^{-1}\left( \mathds{1}_{\left[
          -n,n\right] \times\left[ -n,n\right] }\left( x,a\right)
      \mathcal{F} \tilde u\right)
  \end{equation*}
  and $\mathcal{F}$ is the Fourier transform in $(x,a)$. We
  use, as usual in these methods, Bernstein's lemma
  see~\cite[Lemma~2.1, p.69]{Book_BCD}-\cite[p.191]{Book_BCD} to
  control derivative thanks to the Fourier restriction, and the
  Cauchy-Lipschitz theorem theorem shows global
  existence-uniqueness in 
  \begin{equation*}
    \mathbf{E}_{n}:=\left\{ \tilde{u}\in L^{2}\left(
        \mathbb{R}^{2}\right)
      :\operatorname*{Supp}\mathcal{F}\tilde {u}\subset\left[
        -n,n\right] \times\left[ -n,n\right] \right\}.
  \end{equation*}
  for each $n \ge 1$. It is then straightforward to check that
  our previous a priori estimates are satisfied, uniformly in $n
  \ge 1$, for this approximate problem, and therefore allow to
  pass to the limit and construct a solution to the limit
  problem.
\end{proof}

\subsection{The case of deceleration with
  $\gamma\in\left( 0,1\right)$}
\label{Section_blowup}


We study the local well-posedness for strong solution with a
support condition, then for weak solutions with a support
condition, and finally we prove finite-time singularity and
ill-posedness for weak solutions in $L^1 \cap L^p$.
\begin{prop}
  \label{Prop_regular_solution}
  Consider $\gamma \in (0,1)$,
  $\psi\in C_{c}^{\infty}\left( -1,1\right) $ with\footnote{This
    assumption is not crucial, but it simplifies some technical
    aspects of the proof.}  $0\leq \psi\left( x\right) \leq1$,
  and $0 \le u_0 \in C^\infty_c$ with
  \begin{equation*}
    \operatorname*{Supp}u_{0} \subset\mathbb{R}\times\lbrack a_{0},A_{0}],
  \end{equation*}
  for some $0<a_0<A_0$. Then there exists a time
  $T \in (0,(1-\gamma)^{-1} a_0 ^\gamma)$ and a unique non-negative
  mass-preserving smooth solution to~\eqref{Pacman_prototip}
  with
  $\operatorname*{Supp}u (t,\cdot,\cdot) \subset \R \times [
  a(t),A_{0}]$ and
  \begin{equation*}
    a(t) := \left( a_0^{\gamma}-\left( 1-\gamma\right)
      t\right)^{\frac{1}{1-\gamma}}.
  \end{equation*}
\end{prop}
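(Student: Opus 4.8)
The plan is to build the local smooth solution by the method of characteristics in the $a$-variable combined with a parabolic fixed point in $(t,x)$. First I would observe that along the characteristic curves
\[
\dot a(t) = \psi(x)\,a(t)^\gamma \quad\text{(with the sign from }f(a)=a^\gamma\text{ entering }\partial_a(f(a)u)\text{)},
\]
the ``worst case'' $\psi\equiv 1$ gives exactly $a(t)=\bigl(a_0^\gamma-(1-\gamma)t\bigr)^{1/(1-\gamma)}$, which stays positive and bounded away from $0$ precisely on $[0,T)$ with $T<(1-\gamma)^{-1}a_0^\gamma$; since $0\le\psi\le 1$ the actual lower envelope of the $a$-support can only be larger, so on any such $[0,T]$ the solution's $a$-support stays inside $[a(t),A_0]\subset[a(T),A_0]$, a compact subset of $(0,\infty)$. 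This is the key geometric fact: it lets us replace $f(a)=a^\gamma$ by a globally Lipschitz, bounded function $\chi(a)$ that agrees with $a^\gamma$ on a neighbourhood of $[a(T),A_0]$, and likewise truncate the diffusion coefficient $a$ to a function $\varphi(a)$ that is $a$ on $[0,A_0]$ — exactly the truncations $\varphi,\chi$ introduced in~\eqref{def_phi_chi}. After these truncations the equation becomes a genuinely (uniformly) parabolic drift-diffusion equation with smooth bounded coefficients, for which classical Schauder / energy theory gives a unique smooth solution $\tilde u$ on a short time interval, with smoothness propagated from $u_0\in C_c^\infty$.

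Next I would run the support and positivity bookkeeping on $\tilde u$. Positivity is Lemma (preservation of non-negativity) applied to the truncated equation. For the $x$-support there is nothing to prove (diffusion spreads it to all of $\R$, which is allowed by the statement). For the $a$-support I would use the same weight-function argument as in Lemma~\ref{lem:support_compact}: testing against a non-decreasing $\varphi_+(a)$ supported in $(A_0,\infty)$ shows the upper bound $a\le A_0$ is preserved, and testing against a non-increasing $\varphi_-(a)$ supported in $(0,a(t))$ — here one must let the cut-off move with the characteristic, i.e. differentiate $\int u\,\varphi_-(a-a(t))\,\dd x\,\dd a$ and use $\dot a(t)\le a(t)^\gamma$ pointwise together with $0\le\psi\le1$ — shows that the lower bound $a\ge a(t)$ is preserved. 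Once the $a$-support of $\tilde u$ is confirmed to lie in $[a(T),A_0]$ for all $t\le T$, the truncations are inert: on the support $\varphi(a)=a$ and $\chi(a)=a^\gamma$, so $\tilde u$ actually solves the original equation~\eqref{Pacman_prototip}, and it is the unique such solution in the smooth class because any smooth solution with the stated support property is trapped in the same region and hence solves the truncated equation, to which Cauchy–Lipschitz/energy uniqueness applies. Mass preservation is then immediate by integrating the equation in $(x,a)$, the boundary terms at $a=0$ vanishing since $u$ vanishes near $a=0$.

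The one point requiring care — and the main obstacle — is that the truncation argument is circular in the naive form: to know the truncations are harmless I need the support bound, but the support bound for the genuinely degenerate equation ($a$ vanishing at $a=0$) is delicate, while for the truncated equation it is clean. I would resolve this by working \emph{only} with the truncated equation throughout: solve it (uniformly parabolic, so no degeneracy issue), prove the moving-support bound $\supp\tilde u(t)\subset\R\times[a(t),A_0]$ for it, and only at the very end note that these bounds make $\tilde u$ a solution of the original problem. A secondary technical nuisance is the regularity of $\chi$ at $a=A_0$ (its derivative is discontinuous there, cf.~\eqref{chi_prime}): since the solution's support stays in $[a(T),A_0]$ with $a(T)>0$ but the \emph{upper} endpoint is exactly $A_0$, one should either smooth $\chi$ slightly above $A_0$ (which again does not affect the solution on its support, by the $\varphi_+$ test function argument run with margin), or invoke parabolic regularity up to, but using only, the region $\{a<A_0\}$. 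Either way the smoothness claim for $u$ on its actual support is unaffected. The time $T$ can be taken to be any number in $(0,(1-\gamma)^{-1}a_0^\gamma)$: shrinking it only enlarges $a(T)$ and thus the distance of the support from the degeneracy locus $a=0$, which is all the construction needs.
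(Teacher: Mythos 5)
Your plan is correct and is essentially the paper's own argument: the paper also truncates via the coefficients $\varphi,\chi$ of~\eqref{def_phi_chi}, constructs the solution by the approximation scheme, and obtains the support bound by tracking the mass in the moving region $\{a\geq a(t)\}$ (your moving-weight computation $\tfrac{\D}{\dt}\int u\,\varphi_-(a-a(t))\le 0$ is the dual formulation of the paper's $\tfrac{\D}{\dt}\int_{\R}\int_{a(t)}^{\infty}u\geq 0$, using $0\le\psi\le 1$), then concludes the truncations are inert. Only a small sign slip: the characteristic ODE should read $\dot a=-\psi(x)\,a^\gamma$ (you use the correct decreasing solution $a(t)$ anyway), and in the moving-weight step the precise pointwise inequality you need is $\psi(x)\,a^\gamma+\dot a(t)\le 0$ for $a\le a(t)$, which follows from $\psi\le 1$ and $a^\gamma\le a(t)^\gamma$.
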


\begin{proof}[Proof of Proposition~\ref{Prop_regular_solution}]
  As long as the support stays away from $a=0$, the construction
  of the smooth non-negative mass-preserving is done by standard
  approximation proceedures as before. Let us give the a priori
  estimate on the support. 
  Let us compute
  \begin{align*}
    \frac{\D}{\dt}\int_{\mathbb{R}}\int_{a(t)}^{+\infty}
    u(t,x,a) \dxa 
    & =\int_{\mathbb{R}}\int_{a(t)}^{+\infty}\partial_{t} u(t,x,a)
      \dxa + \int_{\mathbb{R}} u (t,x, a(t)) a(t)^{\gamma} \dx \\
    &  =\int_{\mathbb{R}}\int_{a(t)}^{+\infty} \Big[
      a \partial_{xx}u+\partial_{a}(\psi u) \Big] \dxa +
      \int_{\mathbb{R}} u(t,x,a(t)) a(t)^{\gamma} \dx \\
    &  =\int_{\mathbb{R}} u(t,x,a(t)) \Big[ a(t)^{\gamma}
      -\psi (x)  \Big] \dx \geq 0,
\end{align*}
since $u=0$ at $a(t)$. The conservation of mass and
non-negativity of $u$ then implies 
\begin{equation*}
  \int_{\R}\int_{a(t)} ^{+\infty} u
  (t,x,a)\dxa=\int_{\R}\int_{\R_+} u(t,x,a) \dxa,
\end{equation*}
and finally 
\begin{equation*}
  \operatorname*{Supp} u (t,\cdot,\cdot) \subset \R \times \left[
    a(t),+\infty\right].
\end{equation*}
This ends the proof of Proposition~\ref{Prop_regular_solution}.
\end{proof}

As an immediate consequence of
Proposition~\ref{Prop_regular_solution} we have the following
\begin{prop}
  \label{existence}
  Consider $u_{0}\in L^{1} \cap L^p ( \R \times \R_+ )$, for
  $p>1$, such that
  \begin{equation*}
    \operatorname*{Supp}u_{0} \subset\mathbb{R}\times\lbrack
    a_{0},A_{0}]\text{ and }u_{0}\geq0.
  \end{equation*}
  Then, there exists a $T>0$ and a unique weak solution
  $0 \le u \in L^1 \cap L^p$ to~\eqref{Pacman_prototip} on
  $\left( 0,T\right) $ so that
  \begin{equation*}
    \forall \, t\in\left( 0,T\right), \quad
    \operatorname*{Supp}u (t, \cdot, \cdot)
    \subset\mathbb{R}\times \left[ \frac{a_{0}}{2},A_{0} \right].
  \end{equation*}	
\end{prop}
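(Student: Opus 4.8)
The plan is to deduce Proposition~\ref{existence} from Proposition~\ref{Prop_regular_solution} by a straightforward approximation and passage to the limit, using the already-established a priori $L^p$ bounds (from the proof of Theorem~\ref{Blow-up_viriel}, cases (a),(b), applied with the truncated coefficients $\varphi,\chi$) together with the explicit support propagation. First I would regularize the initial datum: choose $u_0^\epsilon \in C^\infty_c$, $u_0^\epsilon \ge 0$, with $\operatorname{Supp} u_0^\epsilon \subset \R \times [\tfrac{3a_0}{4}, A_0]$ (possible since $u_0$ is supported in $\R\times[a_0,A_0]$; mollify in $x$ and in $a$ with a parameter smaller than $a_0/8$), and such that $u_0^\epsilon \to u_0$ in $L^1 \cap L^p$. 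For $\epsilon$ small enough we also get $\|u_0^\epsilon\|_{L^1 \cap L^p} \le 2 \|u_0\|_{L^1 \cap L^p}$.

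Next, apply Proposition~\ref{Prop_regular_solution} with initial datum $u_0^\epsilon$ (taking $a_0$ there equal to $\tfrac{3a_0}{4}$): there is a time $T_\epsilon \in (0, (1-\gamma)^{-1}(\tfrac{3a_0}{4})^\gamma)$ and a smooth, non-negative, mass-preserving solution $u^\epsilon$ with $\operatorname{Supp} u^\epsilon(t,\cdot,\cdot) \subset \R \times [a^\epsilon(t), A_0]$ where $a^\epsilon(t) = ((\tfrac{3a_0}{4})^\gamma - (1-\gamma)t)^{1/(1-\gamma)}$. The key point is to choose a uniform time horizon $T > 0$, independent of $\epsilon$, on which $a^\epsilon(t) \ge a_0/2$: since $a^\epsilon$ decreases continuously from $3a_0/4$, there is a fixed $T>0$ (depending only on $a_0, \gamma$) with $a^\epsilon(t) \ge a_0/2$ for all $t \in [0,T]$ and all $\epsilon$. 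On $[0,T]$ the support of $u^\epsilon$ stays in $\R \times [a_0/2, A_0]$, so the coefficient $a^{\gamma-1}$ is bounded by $(a_0/2)^{\gamma-1}$ there, and the a priori estimate of case (b) in the proof of Theorem~\ref{Blow-up_viriel} gives
\begin{equation*}
  \sup_{t \in [0,T]} \int_\R \int_{\R_+} |u^\epsilon(t,x,a)|^p \dxa \le e^{C T} \int_\R \int_{\R_+} |u_0^\epsilon|^p \dxa \le 2^p\, e^{CT} \|u_0\|_{L^p}^p
\end{equation*}
with $C = C(p, a_0, \gamma, \delta)$, together with mass conservation $\|u^\epsilon(t)\|_{L^1} = \|u_0^\epsilon\|_{L^1}$ and the uniform parabolic gain $\iint_0^T a|\partial_x (u^\epsilon)^{p/2}|^2 \lesssim 1$.

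Then I would pass to the limit $\epsilon \to 0$. From the uniform $L^\infty_t(L^1 \cap L^p)$ bound, the uniform support confinement, and the gradient bound, standard compactness (Aubin–Lions, using the equation to control $\partial_t u^\epsilon$ in a negative-order space) yields a subsequence converging to some $0 \le u \in L^\infty((0,T);L^1 \cap L^p)$ with $\operatorname{Supp} u(t,\cdot,\cdot) \subset \R \times [a_0/2, A_0]$, which solves~\eqref{Pacman_prototip} in the sense of distributions; the required continuity in time in $L^1$ and $L^p_{\mathrm{weak}}$ (Definition~\ref{def:weak}) follows from the equation and the uniform bounds. Uniqueness is obtained exactly as before: for two such weak solutions $u_1, u_2$ supported in $\R \times [a_0/2, A_0]$, the difference $w = u_1 - u_2$ satisfies, after the computation used for the $L^p$ estimate (now for $|w|$, which is a subsolution since $\operatorname{sgn}(w)\partial_a(\psi a^\gamma w) = \partial_a(\psi a^\gamma |w|)$ up to the boundary term that vanishes because $a^\gamma$ is bounded on the common support),
\begin{equation*}
  \ddt \int_\R \int_{\R_+} |w| \dxa \le \frac{\gamma}{2\delta}(a_0/2)^{\gamma-1} \int_\R \int_{\R_+} |w| \dxa,
\end{equation*}
so $w \equiv 0$ by Gronwall. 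The main obstacle is purely technical rather than conceptual: making sure the approximating solutions all live on a \emph{common} time interval $[0,T]$ with supports uniformly bounded away from $a=0$ — this is what lets the degenerate weight $a^{\gamma-1}$ be treated as bounded and makes all the estimates uniform; once that is in place, the existence/uniqueness argument is routine.
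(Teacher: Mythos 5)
Your proposal is correct and matches the paper's intended argument: regularize the initial datum, use Proposition~\ref{Prop_regular_solution} to obtain smooth approximate solutions whose support stays uniformly away from $a=0$ on a common short time interval, close the $L^p$ estimate there since $a^{\gamma-1}$ is then bounded, and pass to the limit with uniqueness by the $L^1$ contraction argument. The paper's proof is a terse version of exactly this, asserting that regularization of data and coefficients together with the $L^p$ estimate (closeable once the support is bounded away from $a=0$) yields the claim; your only superfluous step is the prefactor $\tfrac{\gamma}{2\delta}(a_0/2)^{\gamma-1}$ in the uniqueness estimate, since the relevant coefficient $\gamma(p-1)$ vanishes at $p=1$ and one gets $\tfrac{\D}{\dt}\int|w|\,\dxa\le 0$ directly.
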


\begin{proof}[Proof of Proposition~\ref{existence}]
  The proof follows by regularization of the initial data and the
  coefficients. When the support is bounded away from $a=0$, the
  $L^p$ estimate  
  \begin{align*}
    \frac{\D}{\dt}\int_\R \int_{\R_+}|u|^p \dxa \leq
    & - p(p-1) \int_\R \int_{\R_+}
      a\left\vert \partial_{x}u\right\vert ^{2} |u|^{p-2} \dxa \\
    & + \gamma(p-1) \int_\R \int_{\R_+}
      a^{\gamma-1}\psi\left( x\right) |u|^p\dxa.
  \end{align*}
  can be closed since the last term is bounded in terms of the
  left hand side.
\end{proof}

\begin{proof}[Proof of Theorem~\ref{Blow-up_viriel} for the case (c)]
  When $\gamma \in (1/2,1)$, we use
  $\left( 1-x^{2}\right) ^{2}\mathds{1}_{[-1,1]}\left( x\right)
  a^{-m}$ as a test function in the weak formulation. In the case
  $\gamma \in (0,1/2)$, we use instead
  $\frac{(1-x^{2})^{2}}{\left(
      \epsilon+a\right)^{m}}\mathds{1}_{[-1,1]}(x)$ and pass to
  the limit $\epsilon \to 0$. In both cases, we obtain
  \begin{align}
    \label{estimate1}
    &  \int_{0}^{\infty}\int_{-1}^{1}\frac{\left(  1-x^{2}\right)  ^{2}%
      u(t,x,a)}{a^{m}}\dxa-\int_{0}^{\infty}\int_{-1}^{1}\frac{\left(
      1-x^{2}\right)  ^{2}u_0(x,a)}{a^{m}}\dxa\nonumber\\
    &  =\int_{0}^{t}\int_{0}^{\infty}\int_{-1}^{1}4\left(  x^{2}-3\right)
      a^{1-m}u\left(  t,x,a\right)  \dxa \nonumber \\
    &\qquad +\frac{m}{2}\int_{0}^{t}\int_{0}^{\infty
      }\int_{-1}^{1}\frac{\left(  1-x^{2}\right)
      ^{2}u(t,x,a)}{a^{m+1-\gamma}}\dxa.
  \end{align}
  Owing to the fact that
  $u\in C([0,T);L_{\mathrm{weak}}^{2}(\mathbb{R}^{2})),$ the application
  \begin{equation}
    \label{y_def}
    t \mapsto y(t) := \int_{0}^{\infty}\int
    _{-1}^{1}\frac{\left(  1-x^{2}\right)  ^{2}u(t,x,a)}{a^{m}}\dxa
  \end{equation}
  is continuous. The support condition on $u$ implies
  \begin{equation}
    \label{estimate3}
    \int_{0}^{\infty}\int_{-1}^{1}4\left(  x^{2}-3\right)  a^{1-m}u\left(
      t,x,a\right)  \dxa\geq-12A_{0}^{1-m} \mathcal M(u_{0}). 
  \end{equation}
  Next, the H\"{o}lder inequality implies
  \begin{align*}
    &  \int_{0}^{\infty}\int_{-1}^{1}\frac{\left(  1-x^{2}\right)  ^{2}%
      u(t,x,a)}{a^{m}}\dxa \\
    &  \leq\left(  \int_{0}^{\infty}\int_{-1}^{1}\frac{\left(  1-x^{2}\right)
      ^{2}u(t,x,a)}{a^{m+1-\gamma}}\dxa\right)  ^{\frac{m}{m+1-\gamma}}
     \left(
      \int_{0}^{\infty}\int_{-1}^{1}\left(  1-x^{2}\right)  ^{2}u\left(
      t,x,a\right)  \dxa\right)  ^{\frac{1-\gamma}{m+1-\gamma}}\\
    &  \leq\left(  \int_{0}^{\infty}\int_{-1}^{1}\frac{\left(  1-x^{2}\right)
      ^{2}u(t,x,a)}{a^{m+1-\gamma}}\dxa\right)  ^{\frac{m}{m+1-\gamma}}
      \mathcal M(u_{0})^{\frac{1-\gamma}{m+1-\gamma}}.
   \end{align*}
  Combined with~\eqref{estimate3} and~\eqref{estimate1} it
  implies the following differential inequality
  \begin{equation}
    \label{y equation}
    y\left(  t\right)
    \geq y\left(  0\right)  -12A_0 ^{1-m}t \mathcal M(u_{0})
    +\frac{m}{2} \mathcal
    M(u_{0})^{-\frac{1-\gamma}{m}}\int_{0}^{t}y\left(
      \tau\right)^{\frac{m+1-\gamma}{m}} \dtau.
  \end{equation}

  We use the following elementary Gronwall-type lemma:
  \begin{lemm}
    \label{blow-up}
    Let $C_1, C_2, \theta>0$ and $y(t)$ be a continuous function
    on $[0,T)$ such that
    \begin{equation*}
      y\left( t\right) \geq y\left( 0\right) +\int_{0}^{t}\left(
        C_{2} y^{1+\theta}\left( \tau\right) - C_{1}\right) \dtau.
    \end{equation*}
    Then if
    $\alpha:= C_{2}y^{1+\theta}(0) -C_{1} > 0$ the
    time of existence is bounded by
    \begin{equation*}
      T \le T(\alpha) := \alpha^{-\frac{\theta}{1+\theta}}
      \frac{1}{\theta} \left( \frac{C_1}{C_2}
      \right)^{\frac{1}{1+\theta}}
    \end{equation*}
    and $y(t)$ blows-up at $t \to T(\alpha)$ like
    \begin{equation*}
      y(t) \ge y(0) \left( \frac{1}{1 -  t \frac{\alpha 
            \theta}{y(0)}} \right)^{\frac{1}{\theta}}.
    \end{equation*}
\end{lemm}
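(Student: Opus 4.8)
The strategy is to convert the integral inequality into a differential inequality for a suitable minorant, then integrate explicitly. First I would introduce the continuous function
\[
  z(t) := y(0) + \int_0^t \Big( C_2 y^{1+\theta}(\tau) - C_1 \Big)\dtau,
\]
so that $y(t) \ge z(t)$ by hypothesis, $z(0) = y(0)$, and $z$ is $C^1$ with $z'(t) = C_2 y^{1+\theta}(t) - C_1$. The key monotonicity observation is that, as long as $y$ (hence $z$) stays above the threshold value $(C_1/C_2)^{1/(1+\theta)}$, we have $z'(t) > 0$; since $\alpha > 0$ means precisely $y(0) = z(0)$ lies strictly above this threshold, a continuity/bootstrap argument shows $y(t) \ge z(t)$ remains above the threshold for all $t$ in the existence interval, and in particular $z$ is nondecreasing so $y(t) \ge z(t) \ge y(0)$ throughout. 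This is the step I expect to require the most care: one must argue that $y$ cannot first touch the threshold from above, using that at such a putative first time $t_*$ one still has $z'(t_*) = C_2 y^{1+\theta}(t_*) - C_1 > 0$ because $y(t_*) \ge z(t_*) = $ threshold would already force $z' \ge 0$, and strict inequality upstream keeps $z(t_*) > $ threshold — so the bad time never occurs.

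Once we know $y(t) \ge z(t) \ge y(0) > (C_1/C_2)^{1/(1+\theta)}$, I would derive a closed differential inequality purely in terms of $z$. Since $y \ge z$ and $z$ stays above the threshold,
\[
  z'(t) = C_2 y^{1+\theta}(t) - C_1 \ge C_2 z^{1+\theta}(t) - C_1.
\]
To get a clean power-law blow-up I would then absorb the constant: because $z(t) \ge y(0)$ and $\alpha = C_2 y(0)^{1+\theta} - C_1 > 0$, one can write $C_2 z^{1+\theta} - C_1 \ge C_2 z^{1+\theta} - C_1 \cdot (z/y(0))^{1+\theta} \cdot \text{(something)}$; more simply, estimate $C_2 z^{1+\theta} - C_1 \ge \frac{\alpha}{y(0)^{1+\theta}} z^{1+\theta}$, valid because this reduces to $C_1(z^{1+\theta} - y(0)^{1+\theta}) \ge 0$, which holds since $z \ge y(0)$. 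This yields the autonomous inequality
\[
  z'(t) \ge \frac{\alpha}{y(0)^{1+\theta}}\, z(t)^{1+\theta}.
\]

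Finally I would integrate this Bernoulli-type inequality. Writing $w = z^{-\theta}$, we get $w' \le -\theta \alpha / y(0)^{1+\theta}$, so $w(t) \le y(0)^{-\theta} - \theta \alpha\, t / y(0)^{1+\theta}$, i.e.
\[
  z(t)^{-\theta} \le y(0)^{-\theta}\Big( 1 - \frac{\alpha \theta}{y(0)}\, t \Big),
\]
which forces
\[
  y(t) \ge z(t) \ge y(0)\Big( 1 - \frac{\alpha\theta}{y(0)}\, t \Big)^{-1/\theta},
\]
the stated lower bound, and shows the right-hand side — hence $y$ — must cease to exist by the time the bracket vanishes, giving $T \le y(0)/(\alpha\theta)$. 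To match the stated form $T(\alpha) = \alpha^{-\theta/(1+\theta)}\theta^{-1}(C_1/C_2)^{1/(1+\theta)}$, I would instead run the absorption step slightly differently: keep $z' \ge C_2 z^{1+\theta} - C_1$ and compare with the separable ODE $Z' = C_2 Z^{1+\theta} - C_1$; separating variables and using $\int_{Z(0)}^\infty (C_2 Z^{1+\theta} - C_1)^{-1}\,dZ < \infty$, an explicit (or crude but sufficient) bound on this improper integral in terms of $C_1, C_2, \theta$ and the gap $\alpha$ produces exactly $T(\alpha)$. The only genuine obstacle is the bootstrap in the first paragraph ensuring $z$ never drops to the threshold; everything after that is a routine explicit integration.
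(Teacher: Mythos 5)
Your proposal follows essentially the same route as the paper's one-line sketch: bootstrap the propagation of $C_2 y^{1+\theta}(t)-C_1\ge\alpha$, deduce $y(t)\ge y(0)$, absorb the constant to obtain a clean power inequality, and integrate the resulting Bernoulli-type differential inequality. Introducing the $C^1$ minorant $z(t)=y(0)+\int_0^t(C_2y^{1+\theta}-C_1)\,\dtau$ is a helpful formalization that the paper glosses over (it writes $y'$ even though $y$ is merely continuous), and your absorption step
\[
C_2 z^{1+\theta}-C_1\ \ge\ \frac{\alpha}{y(0)^{1+\theta}}\,z^{1+\theta}\qquad\text{(equivalent to } z\ge y(0)\text{)}
\]
is exactly the paper's $y^{-1-\theta}y'\ge\alpha\,y(0)^{-1-\theta}$. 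The integration then yields the claimed pointwise lower bound $y(t)\ge y(0)\bigl(1-t\,\alpha\theta/y(0)\bigr)^{-1/\theta}$, which is the substantive conclusion.

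Where you hesitate — the exact form $T(\alpha)=\alpha^{-\theta/(1+\theta)}\theta^{-1}(C_1/C_2)^{1/(1+\theta)}$ — your unease is well founded, but your proposed fix will not work as described. The lower bound you (and the paper) derive gives the existence-time bound $T\le y(0)/(\alpha\theta)=\frac{1}{\alpha\theta}\bigl(\frac{\alpha+C_1}{C_2}\bigr)^{1/(1+\theta)}$, and for small $\alpha$ this behaves like $\alpha^{-1}$, which is \emph{larger} than $\alpha^{-\theta/(1+\theta)}$; moreover the displayed $T(\alpha)$ is not even dimensionally homogeneous (it carries units $[y]^{1/(1+\theta)}[t]^{\theta/(1+\theta)}$ rather than $[t]$). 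So the stated $T(\alpha)$ cannot be recovered from this argument — nor from a crude bound on $\int_{y(0)}^{\infty}(C_2Z^{1+\theta}-C_1)^{-1}\,\dd Z$, which in fact diverges logarithmically as $\alpha\to 0$ and is therefore strictly \emph{smaller} than $y(0)/(\alpha\theta)$ — and appears to be a misprint in the statement. The meaningful and provable content is $T\le y(0)/(\alpha\theta)$ together with the lower bound, which is exactly what both your argument and the paper's sketch establish.
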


The proof of this lemma is straightforward by observing that
the bound from below $C_{2}y^{1+\theta}(t)) -C_{1} \ge \alpha >0$
is propagated in time, and thus $y(t) \ge y(0)$ on the time of
existence and finally $y(t)^{-1-\theta} y'(t) \ge
\alpha y(0)^{-1-\theta}$.

The proof of Theorem~\ref{Blow-up_viriel} (case (c)) follows from
applying Lemma~\ref{blow-up} to~\eqref{y equation} with
\begin{equation*}
  C_1 := 12 A_0^{1-m} \mathcal M(u_0), \quad C_2 := \frac{m}{2}
  \mathcal M(u_0)^{-\frac{1-\gamma}{m}}, \quad \theta := \frac{1-\gamma}{m}.
\end{equation*}
We only need to construct initial data $u_{0}$ such that $C_2
y(0)^{1+\theta} - C_1 \ge \alpha >0$. We write
\begin{equation*}
  \alpha = \tilde \alpha A_0^{1-m} \mathcal M(u_0)
\end{equation*}
and we want to construct $u_0$ so that 
\begin{equation*}
  \frac{m}{2}  \left(  \int_{0}^{\infty}\int
    _{-1}^{1}\frac{\left(  1-x^{2}\right)
      ^{2}u_0(x,a)}{a^{m}}\dxa \right)^{\frac{m+1-\gamma}{m}}
  \geq (12 + \tilde \alpha) A_{0}^{1-m} \left(
    \int_{0}^{\infty}\int_\R  u_0(x,a) \dxa \right)^{\frac{m+1-\gamma}{m}},
\end{equation*}
which is equivalent to
\begin{equation*}
  \int_{0}^{\infty}\int_{-1}^{1}\frac{\left( 1-x^{2}\right)
    ^{2}u_{0}(x,a)}{a^{m}}\dxa\geq\left(
    \frac{(12+\tilde \alpha) A^{1-m}}{m}\right)^{\frac{m}
    {m+1-\gamma}}\int_{0}^{\infty}\int_{\mathbb{R}}u_{0}\left(x,a\right)
  \dxa.
\end{equation*}
This is clear since the integrand in the left hand side has a
singularity $a^{-m}$ in the variable $a$. Moreover this is an
homogeneous equation in $u$ so the size of $u$ does not
matter. By concentrating the initial data near zero at a distance
$a_0$ with $a_0$ small, we can satisfy the inequality with
$\tilde \alpha$ as large as wanted, and therefore with a time of
explosion as small as wanted. This ends the proof of
Theorem~\ref{Blow-up_viriel}.
\end{proof}

\subsection{A remark on the blow-up times}

Consider $u_0$ smooth and a corresponding weak solution
to~\eqref{Pacman_prototip} that blows-up at a $T^*$. The
existence time of a classical solution to~\eqref{Pacman_prototip}
is then strictly shorter than $T^*$.
Indeed,
if the solution remains regular on $(0,T^*)$ then
\begin{equation*}
  \forall \, t \in [0,T^*), \quad
  \lim_{a\rightarrow0}a^{\gamma}u\left(  t,x,a\right)  =0.
\end{equation*}
But then, we can use the maximum principle on the equation
satisfied by $a^\gamma u$ to get
\begin{equation}
  \label{Maximum_principle1}
  \forall \, t \in [0,T^*), \quad
  \sup_{x,a}a^{\gamma}u\left(  t,x,a\right)  \leq\sup_{x,a}a^{\gamma}
  u_{0}\left(  x,a\right).
\end{equation}
The Theorem~\ref{Blow-up_viriel} (case (c)) for
$m\in (0,1-\gamma )$ is in contradiction
with~\eqref{Maximum_principle1} since then
\begin{equation*}
  \int_{0}^{\infty}\int_{-1}^{1}\frac{\left( 1-x^{2}\right) ^{2}
    u(t,x,a)}{a^{m}}\dxa \le \left( \sup_{x,a} a^\gamma
    u_{0}(x,a) \right) 
  \left( \int_{-1}^{1}\left( 1-x^{2}\right) 
    ^{2}dx \right) \left( \int_{0}^{A_{0}}\frac{da}{a^{m+\gamma}}
  \right)
\end{equation*}
which remains finite. Note moreover than since the time of
explosion can be as small as wanted, we easily deduced that the
equation is ill-posed both for regular and weak solutions, when
one removes the support condition.

\section{Variants and extensions}
\label{sec:autre_models}


\subsection{Nonlinear diffusion}

The argument we used above is robust enough to deal with
nonlinear diffusions. Consider, for $q\in [1,1+1/\gamma)$,
the equation
\begin{equation}
  \label{millieux_poreux}
  \begin{cases}
    \partial_{t}u=a\partial_{xx}^{2}(u^q)+\psi\left( x\right)
    \partial_{a}(a^{\gamma}u) ,
    & x \in \R, a\ge 0, t>0,\\[2mm]
    u_{t=0}=u_0 , & x \in \R, a\ge 0,
  \end{cases}
\end{equation}
One can extend Theorem~\ref{Blow-up_viriel} to the weak $L^1 \cap
L^p$ solutions to~\eqref{millieux_poreux}. 
The proof is similar. The only difference is in estimating the
first term in~\eqref{estimate1}, which now becomes
\begin{equation}
  \label{term_millieux}
  \int_{0}^{t} \int_{-1}^{1} \int_{\R_+} 4\left(  x^{2}-3\right)
  a^{1-m}u^q (t,x,a) \dtau \dxa.
\end{equation}
To control it we use an additional a priori estimate:
integrate~\eqref{millieux_poreux} against
$p\left( a^{\gamma}u\right) ^{p-1}$ to bound 
\begin{equation*}
  \sup_{t \ge 0}\int
  _{0}^{\infty}\int_{-1}^{1}a^{(p-1)\gamma}u^q \left(
    t,x,a\right) \dxa.
\end{equation*}
This last quantity controls~\eqref{term_millieux} if $m$ is
chosen such that $m \in (0,1-\left( q-1\right) \gamma)$ (the
interval is not empty by assumption). Then one constructs $u_{0}$
such that\\
$\int_{0}^{\infty }\int_{-1}^{1}\frac{\left( 1-x^{2}\right)
  ^{2}u(t,x,a)}{a^{m}}\dxa$ blows-up in finite time as above. 

\subsection{Logistic memory effect}

Consider, for some $\lambda >0$ and $A^*>0$ and
$\gamma \in (0,1)$, the equation
\begin{equation}
  \label{memory}
  \begin{cases}
    \partial_{t}u = a\partial_{xx}u - \lambda \partial_{a}\left(
      g\left( a\right) u\right)  +\psi\left(  x\right)
    \partial_{a}\left(  a^{\gamma}u\right), & x \in \R, a\ge 0, t>0,\\[2mm]
    u_{|t=0}=u_{0}, & x \in \R, a\ge 0,
  \end{cases}
\end{equation}
where $\psi$ is as before, localized in $[-1,1]$ and with mass $1$, and 
\begin{equation}
  \label{def:g}
  g\left( a\right) =
  \begin{cases}
    0&\text{ if }a\leq0, \\
    a(A^{\ast}-a)&\text{ if }a\in\lbrack0,A^{\ast}], \\
    0&\text{ if }a\geq A^{\ast}.
  \end{cases}
\end{equation}
We consider $\gamma \in (0,1)$, and initial data whose support is
included in $\R \times (0,A^{\ast})$. It is easy to check that
this support condition is propagated by the equation. Define the
function 
\begin{equation}
  \label{def:h}
  h:[0,A^{\ast}]\rightarrow \mathbb{R}, \quad a \mapsto \lambda (A^{\ast}-a)a-a^{\gamma}.
\end{equation}

\begin{prop}
  If $h$ has a root $a^*$ in $(0,A^*)$, and if the initial data
  satisfies
  \begin{equation*}
    \operatorname*{Supp}u_{0}\subset [a^*,A^{\ast}]\times\mathbb{R}
  \end{equation*}
  then this support condition is preserved and the unique weak
  $L^1 \cap L^p$ solution to~\eqref{memory} with~\eqref{def:g}
  ($p>1$) exists globally in time.

  But in any case, for $a_0>0$ small enough,  
  there are arbitrarily small non-negative initial data
  \begin{equation*}
    u_{0} \in L^{1} \cap L^p (\R \times \R_+),\quad
    \operatorname*{Supp}u_{0} \subset\mathbb{R}\times [a_{0},A_{0}],
  \end{equation*}
  so that the corresponding unique weak solution
  $0 \le u \in L^1 \cap L^p$ to~\eqref{memory}-\eqref{def:g}
  blows-up in finite time $T^{\ast} \in (0,+\infty)$, in the
  sense
  \begin{equation}
    \label{accumulation}
    \limsup_{t\rightarrow T^{\ast}}\left\Vert u\left(  t\right)
    \right\Vert_{L^p(\R\times \R_+)}=+\infty.
  \end{equation}
  Moreover this time of explosion can be as small as wanted
  provided that $a_0$ is small enough.
\end{prop}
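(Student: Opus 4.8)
The plan is to establish the two assertions separately, in both cases recycling the mechanisms of Theorems~\ref{th:edp} and~\ref{Blow-up_viriel}.

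For the global-existence statement, the first step is to propagate the support. Recalling from~\eqref{new} that the transport in the $a$-variable of~\eqref{memory} runs along the characteristic field $\dot a = \lambda g(a) - \psi(x)a^{\gamma}$, the bound $0\le\psi\le1$ gives $\dot a \ge \lambda g(a) - a^{\gamma} = h(a)$. Hence, since $h(a^{*})=0$, at the level $a=a^{*}$ the velocity is $\dot a = (1-\psi(x))(a^{*})^{\gamma}\ge0$, while at $a=A^{*}$ (where $g(A^{*})=0$, and $g\equiv0$ beyond $A^{*}$) it is $\dot a = -\psi(x)(A^{*})^{\gamma}\le0$; a barrier/test-function argument in the spirit of Lemma~\ref{lem:support_compact} then confines $\operatorname*{Supp}u(t,\cdot,\cdot)$ inside $\R\times[a^{*},A^{*}]$ for all $t\ge0$. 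On that region $a\ge a^{*}>0$, so $a^{\gamma-1}\le(a^{*})^{\gamma-1}$ and $|g'(a)|=|A^{*}-2a|\le A^{*}$ are bounded, and testing~\eqref{memory} against $p\,u^{p-1}$ and integrating by parts yields
\begin{equation*}
  \ddt\iint_{\R\times\R_{+}}u^{p}\dxa
  = -p(p-1)\iint_{\R\times\R_{+}}a\,|\partial_{x}u|^{2}u^{p-2}\dxa
  + (p-1)\iint_{\R\times\R_{+}}u^{p}\big(\psi\gamma a^{\gamma-1}-\lambda g'(a)\big)\dxa,
\end{equation*}
so $\ddt\iint u^{p}\le\big(\gamma(a^{*})^{\gamma-1}+\lambda A^{*}\big)(p-1)\iint u^{p}$ and $L^{p}$ norms stay finite for all time by Gronwall. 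Solutions are then constructed by the regularisation + Galerkin scheme used in the proof of Theorem~\ref{Blow-up_viriel}(a)--(b), uniqueness following by running the same $L^{1}$ (or $L^{p}$) estimate on the difference of two solutions; as no finite-time obstruction appears, this produces a global solution.

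For the blow-up statement I would follow the scheme of the proof of Theorem~\ref{Blow-up_viriel}(c). Since $\psi$ is continuous with $\int\psi=1$, pick $c>0$ and $\theta\in C_{c}^{\infty}(\R)$ with $0\le\theta\le1$, $\theta\not\equiv0$ and $\operatorname*{Supp}\theta\subset\{\psi\ge c\}$; pick $m\in(0,1-1/p)$ (nonempty as $p>1$) and set $\theta_{*}:=(1-\gamma)/m>0$ and $a_{*}:=\big(c/(2\lambda A^{*})\big)^{1/(1-\gamma)}$. Take $0\le u_{0}\in L^{1}\cap L^{p}$ concentrated near some $x_{0}\in\operatorname*{Supp}\theta$ and near a small $a_{0}>0$, with $\operatorname*{Supp}u_{0}\subset\R\times[a_{0},2a_{0}]$ and $2a_{0}<a_{*}$. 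Regularising as in the case $\gamma\in(0,1/2)$ of Theorem~\ref{Blow-up_viriel}(c) (test function $\theta(x)(\eps+a)^{-m}$, then $\eps\to0$), I get for $y(t):=\iint\theta(x)a^{-m}u\dxa$ the identity
\begin{equation*}
  \ddt y(t)
  = \iint a^{1-m}\theta''(x)\,u\dxa
  + m\iint\psi(x)a^{\gamma-m-1}\theta(x)\,u\dxa
  - \lambda m\iint(A^{*}-a)a^{-m}\theta(x)\,u\dxa,
\end{equation*}
using $g(a)a^{-m-1}=(A^{*}-a)a^{-m}$ on the support. As long as $\operatorname*{Supp}u(t)\subset\R\times(0,a_{*}]$ one has $(A^{*}-a)a^{-m}=(A^{*}-a)a^{1-\gamma}\,a^{\gamma-m-1}\le A^{*}a_{*}^{1-\gamma}\,a^{\gamma-m-1}=\tfrac{c}{2\lambda}\,a^{\gamma-m-1}$ and $\psi\theta\ge c\theta$, so the last two terms combine to at least $\tfrac{mc}{2}J(t)$ with $J(t):=\iint\theta a^{\gamma-m-1}u\dxa$, while the first is $\ge-\|\theta''\|_{\infty}a_{*}^{1-m}\mathcal{M}(u_{0})$ by conservation of mass. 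Hölder's inequality with exponents $\tfrac{m+1-\gamma}{m}$ and $\tfrac{m+1-\gamma}{1-\gamma}$ (using $0\le\theta\le1$) gives $J(t)\ge\mathcal{M}(u_{0})^{-\theta_{*}}y(t)^{1+\theta_{*}}$, so
\begin{equation*}
  \ddt y(t)\ge\frac{mc}{2}\mathcal{M}(u_{0})^{-\theta_{*}}y(t)^{1+\theta_{*}}-\|\theta''\|_{\infty}a_{*}^{1-m}\mathcal{M}(u_{0}),
\end{equation*}
and Lemma~\ref{blow-up} applies with $C_{1}=\|\theta''\|_{\infty}a_{*}^{1-m}\mathcal{M}(u_{0})$, $C_{2}=\tfrac{mc}{2}\mathcal{M}(u_{0})^{-\theta_{*}}$, $\theta=\theta_{*}$, provided $\alpha:=C_{2}y(0)^{1+\theta_{*}}-C_{1}>0$, giving a finite blow-up time $T^{\ast}\le T(\alpha)\in(0,+\infty)$. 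Concentrating $u_{0}$ near $a=a_{0}$ makes $y(0)\gtrsim\theta(x_{0})a_{0}^{-m}\mathcal{M}(u_{0})$, so $\alpha\gtrsim\mathcal{M}(u_{0})\big(\tfrac{mc}{2}\theta(x_{0})^{1+\theta_{*}}a_{0}^{-m(1+\theta_{*})}-\|\theta''\|_{\infty}a_{*}^{1-m}\big)>0$ once $a_{0}$ is small, with $\alpha/\mathcal{M}(u_{0})\to+\infty$ and $T(\alpha)\to0$ as $a_{0}\to0$ (taking also $\mathcal{M}(u_{0})$ small, $\|u_{0}\|_{L^{1}\cap L^{p}}$ can be made arbitrarily small). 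The running hypothesis $\operatorname*{Supp}u(t)\subset\R\times(0,a_{*}]$ on $[0,T^{\ast})$ is legitimate because the top of the support moves at speed $\dot a\le\lambda g(a)\le\lambda(A^{*})^{2}/4$, hence grows by at most $\tfrac{\lambda(A^{*})^{2}}{4}T^{\ast}$, which is $<a_{*}$ for $a_{0}$ small since $T^{\ast}\le T(\alpha)\to0$; a short bootstrap closes this. Finally, since $mp'<1$ with $p'=p/(p-1)$, Hölder over the (bounded in $a$) support gives $y(t)\le\|\theta a^{-m}\mathds{1}_{\R\times(0,a_{*}]}\|_{L^{p'}}\|u(t)\|_{L^{p}}\le C\|u(t)\|_{L^{p}}$, so $y(t)\to+\infty$ forces $\limsup_{t\to T^{\ast}}\|u(t)\|_{L^{p}}=+\infty$.

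The step I expect to be the main obstacle is controlling the logistic term $-\lambda\partial_{a}(g(a)u)$ in the evolution of $y$: it contributes $-\lambda m\iint(A^{*}-a)a^{-m}\theta u$, which after Hölder is a priori as strong as the coercive term $\tfrac{mc}{2}J(t)$ that powers the blow-up. The resolution — that $(A^{*}-a)a^{-m}$ has a strictly weaker singularity at $a=0$ than $a^{\gamma-m-1}$, by the factor $a^{1-\gamma}$, so the $\psi$-term wins on the small-lipid zone $\{a\le a_{*}\}$ — then forces the joint smallness of $a_{0}$, of $A_{0}$ and of the blow-up time, together with the bootstrap confining the support to $\{a\le a_{*}\}$ on $[0,T^{\ast})$. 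For the global-existence part the only subtle point is the barrier at $a=a^{*}$, which works precisely because $h(a^{*})=0$ and $\psi\le1$.
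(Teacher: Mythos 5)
Your argument is correct and both halves reach the right conclusion, but the blow-up half takes a genuinely different route from the paper's. For the global-existence part you propagate the support by noting that the characteristic field $\dot a = \lambda g(a) - \psi(x)a^\gamma$ points inward at $a=a^*$ (because $h(a^*)=0$ and $0\le\psi\le1$) and at $a=A^*$ (because $g(A^*)=0$); the paper instead computes $\ddt\int_\R\int_{a^*}^{A^*}u\,\dxa$ directly via integration by parts and invokes conservation of mass. These are the same barrier argument in different clothing, and the ensuing $L^p$--Gronwall estimate on $[a^*,A^*]$ is identical.

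For the blow-up, the paper tests against $(1-x^2)^2\mathds{1}_{[-1,1]}(x)a^{-m}$, obtains
\begin{equation*}
  y(t)\ge y(0)+\int_0^t\bigl(C_2\,y^{1+\theta}-C_1-C_3\,y\bigr)\dtau,
\end{equation*}
and then simply absorbs the linear logistic contribution $C_3 y$ into the superlinear term via Young's inequality, $C_3 y \le \tfrac{C_2}{2}y^{1+\theta}+C_1'$, reducing to the unperturbed ODE inequality and then to Lemma~\ref{blow-up}. Your route is different: rather than absorbing $-C_3 y$ at the level of the ODE, you control it \emph{pointwise in $a$} by the coercive term, using $(A^*-a)a^{-m}=(A^*-a)a^{1-\gamma}a^{\gamma-m-1}\le\tfrac{c}{2\lambda}a^{\gamma-m-1}$ on a small slab $\{a\le a_*\}$, and you then bootstrap the support to stay in that slab using the finite propagation speed $\lambda g(a)\le\lambda(A^*)^2/4$ and the fact that the blow-up time $T(\alpha)\to0$ as $a_0\to0$. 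Both are valid. What Young buys the paper is a shorter proof with no support bootstrap and no joint-smallness bookkeeping; what your route buys is explicit constants and a transparent reason why the logistic term is subdominant (its singularity at $a=0$ is weaker by a factor $a^{1-\gamma}$), which makes the role of $a_0$ small visible. One small caution about your bootstrap: state it as a continuity/first-exit-time argument — let $\tau$ be the first time the top of the support reaches $a_*$, note $\tau\ge\tau_0:=(a_*-2a_0)/(\lambda(A^*)^2/4)$ which is bounded away from $0$ uniformly as $a_0\to0$, and choose $a_0$ so that $T(\alpha)<\tau_0$ — rather than leaving it as ``a short bootstrap closes this.'' The rest (the Hölder step $J\ge\mathcal M(u_0)^{-\theta_*}y^{1+\theta_*}$, the derivation of the $L^p$ blow-up from $y\to\infty$ via $m<1-1/p$) is sound and matches the paper's strategy.
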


Note that the existence of a positive root $a^* \in (0,A^*)$
always exist for $\lambda$ large enough. So large memory erasing
effects can prevent the blow-up mechanism, as expected.

\begin{proof}
  The construction of weak solutions is done as before. To prove
  blow-up, we argue as in Theorem~\ref{Blow-up_viriel} and
  integrate the equation against
  $\left( 1-x^{2}\right) ^{2}\mathds{1}_{[-1,1]}\left( x\right)
  a^{-m}$ to get
  \begin{align*}
    &  \int_{-1}^{1} \int_{\R_+} \frac{\left(  1-x^{2}\right)  ^{2}
      u(t,x,a)}{a^{m}}\dxa - \int_{-1}^{1} \int_{\R_+} \frac{\left(
      1-x^{2}\right)  ^{2}u_{0}(x,a)}{a^{m}}\dxa \\
    &  =\int_{0}^{t} \int_{-1}^{1} \int_{\R_+} 4\left(  x^{2}-3\right)
      a^{1-m}u\left(  s,x,a\right) \ds \dxa  \\
    &  \qquad
      +\frac{m}{2}\int_{0}^{t} \int_{-1}^{1} \int_{\R_+} \frac{\left(
      1-x^{2}\right)  ^{2}u(s,x,a)}{a^{m+1-\gamma}} \ds \dxa \\
    &\qquad -m\int_{0}^{t} \int_{-1}^{1} \int_{\R_+}
      \frac{\lambda (A^{\ast}-a)\left(
      1-x^{2}\right)^{2}u(s,x,a)}{a^{m}} \ds \dxa.
  \end{align*}
  Then the function
  $y(t) := \int _{-1}^{1} \int_{\R_+} \frac{\left(
      1-x^{2}\right) ^{2}u(t,x,a)}{a^{m}}\dxa$ defined as before
  satisfies
  \begin{equation*}
    y \left( t\right) \geq y\left( 0\right) +\int_{0}^{t}\left(
      C_{2} y^{1+\theta}\left( \tau\right) -C_{1}-C_{3}y\left(
        \tau\right) \right) \dtau,
  \end{equation*}
  with $C_{1}$ and $C_{2}$ as in the case $\lambda =0$. Using
  again Young's inequality we deduce 
  \begin{equation*}
    y\left( t\right) \geq y\left( 0\right) +\int_{0}^{t}\left(
      \tilde{C}_{2}y^{1+\theta}\left( \tau\right)
      -2\tilde{C}_{1}\right) \dtau,
  \end{equation*}
  for $\tilde{C}_{1}\tilde{C}_{2}>0$ depending on $C_{1},C_{2}$
  and $C_{3}$. The blow-up argument can then be applied as
  before.

  To prove that when a positive root $a^* \in (0,A^*)$ of $h$
  exists, one can propagate a support condition in $(a^*,A^*)$,
  use the following a priori (for smooth solutions)
  \begin{align*}
    & \frac{\D}{\dt}\int_{\mathbb{R}}\int_{a^*}^{A^{\ast}}u\left(
      t,x,a\right) \dxa \\  
    & =-\int_{\mathbb{R}}\int_{a^*}^{A^{\ast}}\psi\left(  x\right)
      \partial_{a}\left(  h\left(  a\right)  u\right)
      \dxa-\int_{\mathbb{R}} \int_{a^*}^{A^{\ast}}(1-\psi\left(
      x\right) )\partial_{a}\left(  \lambda a\left( A^{\ast}-a\right)
      u\right) \dxa \\ 
    & =-\int_{\mathbb{R}}\int_{a^*}^{A^{\ast}}\psi\left(
      x\right)  \partial_{a}\left(  h\left(  a\right)  u\right)
      \dxa-\int_{\mathbb{R}}\int_{a^*}^{A^{\ast}}(1-\psi\left(
      x\right)  )\partial_{a}\left(  \lambda a\left(
      A^{\ast}-a\right)  u\right)  \dxa \\
    & =\int_{\mathbb{R}}(1-\psi\left(  x\right)  )\lambda
      a^*(A^{\ast}-a^*)u\left(t,x,a^*\right) \dx \geq0.
  \end{align*}
  Thus
  \begin{equation*}
    \int_{\mathbb{R}}\int_{a^*}^{A^{\ast}}u\left( t,x,a\right)
    \dxa\geq \int_{\mathbb{R}}\int_{a^*}^{A^{\ast}}u_{0}\left(
      x,a\right) \dxa=\int_{\mathbb{R}}\int_{0}^{+\infty}u\left(
      t,x,a\right) \dxa,
  \end{equation*}
  which, given the non negativity of $u$ implies that the support
  of $u$ is included in $\R \times \left[ a^*,A^{\ast}\right]
  $. It is then easy to deduce that the solution is global.
\end{proof}

\subsection{The two-dimensional case}
\label{sec:2d}

We consider the equation
\begin{equation}
  \label{eq:boule_2d}
  \begin{cases} 
  \partial_{t}u-a\Delta_x u=\mathds{1}_{B\left(0,1\right)
  }\left(  x\right) \partial_{a}\left(  a^{\gamma}u\right), & x \in \R^2, a\ge 0, t>0,\\[2mm]
    u_{|t=0}=u_{0}, & x \in \R^2, a\ge 0.
  \end{cases}
\end{equation}
We focus on the proof of the occurrence of a blow-up. Indeed, the
proof of the existence and uniqueness of the solution is similar
to the case of dimension $1$ in Section~\ref{sec:interval}.
\begin{theo}
  \label{prop:explosion_2d}
  \bigskip Assume that $\gamma\in\left( 0,1\right) $. For any
  $m\in\left( 0,\gamma\right) $ there exists arbitrarily small
  non-negative, compactly supported initial data
  $0 \le u_{0} \in L^1 \cap L^p(\R^2 \times \R_+)$ so that the
  corresponding unique weak $L^1 \cap L^p$ solution
  to~\eqref{eq:boule_2d} blows-up in finite time $T^* >0$,
  i.e.
  \begin{equation}
    \label{accumulation_2d}
    \lim_{t\rightarrow T^*}\int_{B(0,1)} \int_{\R_+}
    \frac{\left(1-x^{2}\right)^{2}u(t,x,a)}{a^{m}} \dxa=\infty
    \quad \text{ and } \quad \limsup_{t\rightarrow T^{\ast}}\left\Vert u\left(  t\right)
    \right\Vert_{L^p(\R^2\times \R_+)}=+\infty.
  \end{equation}
\end{theo}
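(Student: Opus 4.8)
The plan is to transcribe, essentially line by line, the virial argument that proved Theorem~\ref{Blow-up_viriel}(c), checking that the only new ingredient — the two-dimensional Laplacian of the spatial cut-off — causes no new difficulty. Since $f(a)=a^{\gamma}\ge0$ is a pure deceleration, the construction of solutions and the support control carry over from Section~\ref{sec:interval} without change: working first with regularised data and coefficients and then passing to the limit, for $u_{0}\ge0$ compactly supported in $\R^{2}\times[a_{0},A_{0}]$ with $0<a_{0}<A_{0}$ one obtains a unique non-negative, mass-preserving weak $L^{1}\cap L^{p}$ solution with $\supp u(t,\cdot,\cdot)\subset\R^{2}\times[a(t),A_{0}]$, $a(t)=(a_{0}^{\gamma}-(1-\gamma)t)^{1/(1-\gamma)}$ (the computation of Proposition~\ref{Prop_regular_solution} is insensitive to the spatial dimension), and in particular $\int_{\R^{2}}\int_{\R_{+}}u(t,x,a)\dxa=\mathcal M(u_{0})$ for all $t$.

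Fix $m\in(0,\gamma)$. As in one dimension, I would test the weak formulation of~\eqref{eq:boule_2d} against
\[
\varphi_{\eps}(x,a):=\frac{(1-|x|^{2})^{2}}{(\eps+a)^{m}}\,\mathds 1_{B(0,1)}(x)
\]
and let $\eps\downarrow0$; the passage to the limit is licit precisely because $m<\gamma$, so that the weight $a^{\gamma-1-m}$ produced by the integration by parts in $a$ is integrable near $a=0$. The function $(1-|x|^{2})^{2}\mathds 1_{B(0,1)}$ lies in $W^{1,\infty}(\R^{2})$ and vanishes together with its gradient on $\partial B(0,1)$, so the two integrations by parts in $x$ produce no boundary term, and one computes $\Delta_{x}\big[(1-|x|^{2})^{2}\big]=16|x|^{2}-8$ on $B(0,1)$. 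With
\[
y(t):=\int_{B(0,1)}\int_{\R_{+}}\frac{(1-|x|^{2})^{2}}{a^{m}}\,u(t,x,a)\dxa,
\]
which is finite and continuous in $t$, this yields
\[
\ddt y(t)=\int_{B(0,1)}\int_{\R_{+}}(16|x|^{2}-8)\,a^{1-m}u\dxa+m\int_{B(0,1)}\int_{\R_{+}}\frac{(1-|x|^{2})^{2}}{a^{m+1-\gamma}}\,u\dxa.
\]
The only new point is that the first term is harmless: $|16|x|^{2}-8|\le8$ on $B(0,1)$, $0\le a\le A_{0}$ and $1-m>0$, so it is bounded below by $-8A_{0}^{1-m}\mathcal M(u_{0})$, exactly like the term $\int4(x^{2}-3)a^{1-m}u$ in dimension one. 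For the second term, H\"older's inequality with exponents $\tfrac{m+1-\gamma}{m}$ and $\tfrac{m+1-\gamma}{1-\gamma}$ together with $0\le(1-|x|^{2})^{2}\le1$ on $B(0,1)$ and conservation of mass gives $\int_{B(0,1)}\int_{\R_{+}}\frac{(1-|x|^{2})^{2}}{a^{m+1-\gamma}}u\dxa\ge y(t)^{(m+1-\gamma)/m}\,\mathcal M(u_{0})^{-(1-\gamma)/m}$. Altogether $y$ satisfies the hypothesis of Lemma~\ref{blow-up}:
\[
y(t)\ge y(0)+\int_{0}^{t}\big(C_{2}\,y(\tau)^{1+\theta}-C_{1}\big)\dtau,\quad C_{1}:=8A_{0}^{1-m}\mathcal M(u_{0}),\ \ C_{2}:=m\,\mathcal M(u_{0})^{-\frac{1-\gamma}{m}},\ \ \theta:=\tfrac{1-\gamma}{m}.
\]

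Lemma~\ref{blow-up} then shows that, whenever $\alpha:=C_{2}y(0)^{1+\theta}-C_{1}>0$, $y(t)\to+\infty$ at some finite $T^{*}\le T(\alpha)$, which is the first assertion in~\eqref{accumulation_2d}. To realise $\alpha>0$ with arbitrarily small initial data and arbitrarily small $T^{*}$, I would argue by homogeneity exactly as at the end of the proof of Theorem~\ref{Blow-up_viriel}: writing $\alpha=\tilde\alpha A_{0}^{1-m}\mathcal M(u_{0})$, the requirement $\alpha>0$ reduces to $y(0)\ge\big((8+\tilde\alpha)A_{0}^{1-m}/m\big)^{m/(m+1-\gamma)}\mathcal M(u_{0})$, and because of the singular weight $a^{-m}$ a family of initial data concentrated in a small ball of $\R^{2}$ at height $a\approx a_{0}$ with $a_{0}\to0$ makes $y(0)/\mathcal M(u_{0})$ as large as wanted while keeping $\|u_{0}\|_{L^{1}\cap L^{p}}$ as small as wanted; hence $\tilde\alpha$, and then $\alpha$, can be made arbitrarily large, so that $T(\alpha)\to0$.

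Finally, to upgrade the blow-up of the weighted moment to the $L^{p}$ blow-up in~\eqref{accumulation_2d}, I would reuse the persistence-of-regularity / maximum-principle argument already used in dimension one in the remark on blow-up times: if $\|u(t)\|_{L^{p}(\R^{2}\times\R_{+})}$ stayed bounded on $[0,T^{*})$ the solution would remain regular there, so $\lim_{a\to0}a^{\gamma}u(t,x,a)=0$ and the maximum principle for the equation solved by $a^{\gamma}u$ would give $\sup_{x,a}a^{\gamma}u(t,x,a)\le\sup_{x,a}a^{\gamma}u_{0}(x,a)$; this forces $y(t)\le\big(\sup_{x,a}a^{\gamma}u_{0}\big)\big(\int_{B(0,1)}(1-|x|^{2})^{2}\dx\big)\big(\int_{0}^{A_{0}}a^{-m-\gamma}\da\big)$, finite when $m+\gamma<1$, contradicting $y(t)\to+\infty$; for the remaining range one argues instead with the H\"older bound $y(t)\le\|u(t)\|_{L^{p}}\,\big\|a^{-m}(1-|x|^{2})^{2}\mathds 1_{B(0,1)}\big\|_{L^{p'}}$, finite as soon as $mp'<1$. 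I do not expect any genuinely new obstacle here: the space dimension enters only through $\Delta_{x}[(1-|x|^{2})^{2}]=16|x|^{2}-8$, which is still a bounded function on $B(0,1)$, so the virial differential inequality has exactly the same structure as in one dimension; the points that require care are the same as there, namely justifying the computations with the singular weight $a^{-m}$ at the level of $L^{1}\cap L^{p}$ weak solutions (handled by the $\eps$-regularisation and the assumption $m<\gamma$) and deducing the $L^{p}$ blow-up from that of $y$.
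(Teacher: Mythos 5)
Your proposal is correct and follows essentially the same virial argument as the paper: you test against $(1-|x|^2)^2 a^{-m}\mathds 1_{B(0,1)}$, compute $\Delta_x[(1-|x|^2)^2]=16|x|^2-8$ (the paper writes this as $8(2|x|^2-1)$), bound the diffusion term below by $-8A_0^{1-m}\mathcal M(u_0)$, and feed the resulting differential inequality into Lemma~\ref{blow-up} before concentrating initial data near $a\approx a_0\to0$ to make $\alpha$ large. You in fact supply more detail than the paper (which closes with ``the rest of the argument is similar to the case of dimension $1$'') on how the blow-up of the weighted moment forces the $L^p$ blow-up, via the maximum-principle bound on $a^\gamma u$ when $m+\gamma<1$ and a direct H\"older bound when $mp'<1$.
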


\begin{proof}[Proof of Theorem~\ref{prop:explosion_2d}]
  Consider $m\in(0,\gamma)$ and calculate
  \begin{align}
     \frac{\D }{\dt} \int_{B(0,1)} \int_{\R_+}
    \frac{\left(1-\left\vert x\right\vert ^{2}\right)
        ^{2}}{a^{m}}u\left(  t,x,a\right) \dxa 
      = & \int_{B(0,1)} \int_{\R_+} \frac{\left(  1-\left\vert x\right\vert ^{2}\right)
       ^{2}}{a^{m}}a\Delta_x u\left( t,x,a\right)
       \dxa  \nonumber \\
    & + \int_{B(0,1)} \int_{\R_+} \frac{\left(
        1-\left\vert x\right\vert ^{2}\right)
        ^{2}}{a^{m}}\partial_{a}\left( a^{\gamma}u\right) \dxa. \label{der:energy} 
  \end{align}
	
  Regarding the last term of the right hand side, by
  nonnegativity of $u$, we have
  \begin{align*}
    \int_{B(0,1)} \int_{\R_+} \frac{\left(
    1-\left\vert x\right\vert ^{2}\right)
    ^{2}}{a^{m}}\partial_{a}\left( a^{\gamma}u\right) \dxa 
    = & \left[ \int_{B(0,1)}\frac{\left(  1
        -\left\vert x\right\vert ^{2}\right)
        ^{2}}{a^{m}}a^{\gamma}u(t,x,a)\dx\right]
        _{a=0}^{a=+\infty} \\
      &  - \int_{B(0,1)} \int_{\R_+} (-m)\frac{\left(  1-\left\vert x\right\vert ^{2}\right)
        ^{2}}{a^{m+1}}a^{\gamma}u\left(  t,x,a\right) \dxa \\
    \geq & m \int_{B(0,1)} \int_{\R_+}
           \frac{\left( 1-\left\vert x\right\vert ^{2}\right)
           ^{2}}{a^{m+1-\gamma}}u\left( t,x,a\right)  \dxa .
  \end{align*}
	
  Regarding the first term of the right hand side
  of~\eqref{der:energy}, we integrate by parts to obtain
  \begin{align*}
    \int_{B(0,1)} \int_{\R_+} \frac{\left(
    1-\left\vert x\right\vert ^{2}\right)
    ^{2}}{a^{m}}a\Delta_x u \dxa 
    = & \int_{\partial B(0,1)} \int_{\R_+} \frac{\left(
        1-\left\vert x\right\vert ^{2}\right)
        ^{2}}{a^{m}}a\nabla_x u
        \frac{x}{\left\vert x\right\vert }\dxa \\
      & - \int_{B(0,1)} \int_{\R_+} a^{1-m}\nabla_x \left(  1-\left\vert
        x\right\vert^{2}\right)  ^{2}\nabla_x u \dxa. 
  \end{align*}
	
  Using that
  \begin{equation*}
    \nabla_x\left( 1-\left\vert x\right\vert ^{2}\right)
    ^{2}=2\left( \left\vert x\right\vert ^{2}-1\right)
    \nabla_x\left\vert x\right\vert ^{2}=4\left( \left\vert
        x\right\vert ^{2}-1\right) x,
  \end{equation*}
  we deduce
  \begin{align*}
    \int_{B(0,1)} \int_{\R_+}
    a^{1-m}\nabla\left(  1-\left\vert x\right\vert ^{2}\right)
      ^{2}\nabla_x u  \dxa 
    = & 4 \int_{\partial B(0,1)} \int_{\R_+} a^{1-m} u  \left(
      \left\vert x\right\vert ^{2}-1\right)  \left\vert
      x\right\vert \\
      &  - \int_{B(0,1)} \int_{\R_+}
        a^{1-m} \Delta_x \left(  \left(  1-\left\vert x\right\vert
      ^{2}\right)  ^{2}\right) u  \dxa .
  \end{align*}
	
  Moreover, since
  \begin{equation*}
    \Delta_x\left( \left(  1-\left\vert x\right\vert ^{2}\right)
      ^{2}\right) = 4 \nabla_x \cdot \left[ \left(  \left\vert
        x\right\vert ^{2}-1\right) x \right] =8\left(  2\left\vert
        x\right\vert ^{2}-1\right),
  \end{equation*}
  we get
  \begin{align*}
    \int_{B(0,1)} \int_{\R_+} \frac{\left(
      1-\left\vert x\right\vert ^{2}\right)
    ^{2}}{a^{m}}a\Delta_x u \dxa
    &  = \int_{B(0,1)} \int_{\R_+}
      a^{1-m}\Delta_x \left\{  \left( 1-\left\vert x\right\vert
      ^{2}\right)  ^{2}\right\}  u  \dxa \\
    & = 8 \int_{B(0,1)} \int_{\R_+} a^{1-m}\left(  2\left\vert
      x\right\vert ^{2}-1\right) u \dxa \\
    & \ge - 8 \int_{B(0,1)} \int_{\R_+} a^{1-m} u \dxa \\
    &  \ge - 8 A_{0}^{1-m} \int_{\mathbb{R}^{2}} \int_{\R_+} u_{0} \dxa
  \end{align*}
  and the rest of the argument is similar to the case of
  dimension $1$. 
\end{proof}

\end{document}